\documentclass{actams-en}
\usepackage{CJK}
\usepackage{harpoon}
\usepackage{threeparttable}
\usepackage{tabularx,multirow,bigdelim}%% irregular table
\usepackage{algorithm,algorithmic}
\usepackage{graphics,graphicx,subfigure}

\numberwithin{equation}{section} % Equations numbered by section.
\def\ds{\displaystyle }
\newcommand{\p}{\partial}
\usepackage{cases}
\def\d{\mathrm{d}}
\usepackage{booktabs}
\usepackage{color,xcolor}

	% 或者使用 \textcolor{black}

\begin{document}

 \Volume{2024}{}{}{}
 \DOI{}
 \PageNum{1}
\EditorNote{The first author is supported by the National Social Science Foundation of China (Grant No.24BTJ006).}

\EditorNote{$^*$Corresponding author}

\abovedisplayskip 6pt plus 2pt minus 2pt \belowdisplayskip 6pt
plus 2pt minus 2pt
%%%%%%%%%%%%%%%%%%%%%%%%%%%%%%%%%%%%%%%%%%%%%%%%
% \linebreak (ÕâÒ»ÐÐ³ÅÂú)
\def\no{\nonumber}
\def\R{{\Bbb R}}
\newenvironment{prof}[1][Proof]{\indent\textbf{#1}\quad }
{\hfill $\Box$\vspace{0.7mm}}
\def\q{\quad} \def\qq{\qquad}
\allowdisplaybreaks[4] % Long formulas are allowed to span pages
%%%%%%%%%%%%%%%%%%%%%%%%%%%%%%%%

\AuthorMark{Meihui Zhang et al.
}
\TitleMark{\uppercase{Generalizing subdiffusive Black-Scholes model}}

\title{\uppercase{Generalizing subdiffusive Black-Scholes model by variable exponent: Model transformation and numerical approximation}}
\begin{CJK*}{GBK}{kai}
\author{\sl{Meihui ZHANG}$^1$\quad\sl{Yaxue LIU}$^1$\quad\sl{Mengmeng LIU}$^{2,3*}$\quad\sl{Wenlin QIU}$^3$\quad\sl{Xiangcheng ZHENG}$^4$}
   {  1. School of Statistics and Mathematics, Shandong University of Finance and Economics, Jinan {250014}, China\\
    E-mail\,$:$ zmh$\_$1212@163.com;~liuyaxue@sdufe.edu.cn\\
2. School of Mathematics and Statistics, Hunan Normal University, Changsha, Hunan {410081}, China\\
    E-mail\,$:$ liumengmeng423@163.com  \\
3. School of Mathematics, Shandong University, Jinan {250100}, China\\
    E-mail\,$:$ wlqiu@sdu.edu.cn\\
4. School of Mathematics, State Key Laboratory of Cryptography and Digital Economy Security, Shandong University, Jinan {250100}, China\\
    E-mail\,$:$  xzheng@sdu.edu.cn}
\maketitle
\end{CJK*}

\Abstract{This work generalizes the subdiffusive Black-Scholes model by introducing the variable exponent in order to provide adequate descriptions for the option pricing, where the variable exponent may account for the variation of the memory property. In addition to standard nonlinear-to-linear transformation, we apply a further spatial-temporal transformation to convert the model to a more tractable form in order to circumvent the difficulties caused by the ``non-positive, non-monotonic'' variable-exponent memory kernel. An interesting phenomenon is that the spatial transformation not only eliminates the advection term but naturally turns the original noncoercive spatial operator into a coercive one due to the specific structure of the Black-Scholes model, which thus avoids imposing constraints on coefficients. Then we perform numerical analysis for both the semi-discrete and fully discrete schemes to support numerical simulation. Numerical experiments are carried out to substantiate the theoretical results.
}
\Keywords{Black-Scholes model; Subdiffusion; Variable exponent; Error estimate; Option pricing
}
\MRSubClass{65N06; 65N12; 65N30
}

\section{Introduction}
The Black-Scholes equation is a classical model for option pricing, which takes the form as \cite{BlaSch}
\begin{equation}\label{IBS}
\p_t v(S,t)+\frac{1}{2}\sigma^2S^2\p_S^2v(S,t)+rS\p_Sv(S,t)-rv(S,t)=0,
\end{equation}
where $v(S,t)$ is the price of the option, $r\geq 0$ is the risk-free rate and $\sigma > 0$ is the volatility. There exist extensive investigations for this model, see e.g. \cite{Cap} and the references therein.
Despite its wide and successful applications, it is shown that it fails to characterize the significant movements or jumps over small time
steps in a financial market \cite{Car}.
A potential way to resolve this issue is to replace the integer-order derivative $\p_tv$ in (\ref{IBS}) by the fractional derivative $^R\p^\alpha_t v$ with $\alpha\in (0,1)$  \cite{Jinbook}
\begin{equation}\label{FDI}
^R\p^\alpha_t g(t):=\p_t\int_t^T\frac{g(s)-g(T)}{\Gamma(1-\alpha)(s-t)^\alpha}\d s,
\end{equation}
and some progresses on the resulting subdiffusive Black-Scholes model have been reached in the past few decades \cite{Kazmi,add6,Shi3,Fazlo,Wys,Zeng,ZhaLiu}.
  Nevertheless, a constant exponent may not suffice to  accommodate the variation of the memory property caused by, e.g. the uncertainties or fluctuations in the financial market \cite{Che2,De,Duan,add1,GCH,Jum,Skov}. A possible remedy is to introduce the  variable exponent. In \cite{Luc}, a variable-exponent fractional option pricing model is applied to accommodate various phenomena in financial activities such as the seasonal changes and temporary crises, and the effectiveness of the variable-exponent model has been demonstrated based on the real option price data. Thus, we consider the following subdiffusive Black-Scholes model with variable exponent $0<\alpha(t)<1$ and the expiry $T>0$
\begin{equation}\label{FBS}\begin{array}{c}
\ds ^R\p^{\alpha(t)}_tv+\frac{1}{2}\sigma^2S^2\p_S^2v+rS\p_Sv-rv=0, ~~(S,t)\in (a,b)\times (0,T);\\[0.05in]
 \ds v(a,t)=c_l(t),~~v(b,t)=c_r(t),~~v(S,T)=c_t(S).
\end{array}
\end{equation}
Here  $c_l$, $c_r$, $c_t$ are given data and  $^R\p^{\alpha(t)}_t$ with $0<\alpha(t)<1$ is the variable-exponent fractional differential operator \cite{LorHar}
\begin{equation}\label{VCfrac}^R\p^{\alpha(t)}_t g(t):=\p_t\int_t^T\frac{g(s)-g(T)}{\Gamma(1-\alpha(s-t))(s-t)^{\alpha(s-t)}}\d s.
\end{equation}

As the exact solutions of model (\ref{FBS}) are in general not available, it is necessary to study numerical methods. Extensive numerical results for model (\ref{FBS}) (or its variant (\ref{VtFDEs}) by standard logarithmic transformation) with $\alpha(t)\equiv \alpha$ for some constant $0<\alpha<1$ have been obtained, while there are rare studies for the variable-exponent case.  A main difficulty is that the variable-exponent memory kernel is non-positive and non-monotonic such that conventional numerical analysis methods do not apply. In a recent work \cite{ZhaZhe}, a relevant variable-exponent fractional Black-Scholes model is considered, where an additional term $\p_t v$ is included in (\ref{FBS}). Due to this additional leading term, the impact of the variable exponent is significantly weakened such that the numerical analysis could be performed.

For model (\ref{FBS}) (or its variant (\ref{VtFDEs})) where the variable-exponent term serves as the leading term, one could apply the convolution method developed in \cite{Zhe} to convert the original model to a more tractable form. Nevertheless, the convolution method could lead to the coupling between spatial operators and the temporal convolution such that the advection term in (\ref{VtFDEs}) may cause difficulties in numerical analysis.  Thus, a further spatial transformation could be employed for (\ref{VtFDEs}) to eliminate the advection term, cf. the transformed equation (\ref{eq4}). In general, this method changes the spatial coefficients such that some constraints on these coefficients should be imposed to ensure the coercivity of the spatial operators. However, an interesting phenomenon is that the spatial operator in the transformed equation (\ref{eq4}) naturally keeps coercive due to the specific structure of (\ref{VtFDEs}), even though the spatial operator in (\ref{VtFDEs}) may not be coercive. We thus combine this and the convolution method to finally obtain a numerically feasible scheme (\ref{eq13})--(\ref{eq14}).

We are now in the position to consider numerical approximation to the transformed model (\ref{eq13})--(\ref{eq14}). The piecewise linear interpolation approximation is used to discretize the convolutions, which results in a time-discrete scheme. Then the finite element method is applied in space to construct the fully discrete scheme. We derive error estimates for both schemes and perform numerical experiments to substantiate the theoretical findings.

The rest of the work is organized as follows: Section \ref{Sec2} presents the model transformation, which reduces the original model into a more tractable form. Then, Section \ref{Sec3} establishes a time-discrete scheme and analyzes its numerical stability and error estimate. In Section \ref{Sec4}, a fully discrete finite element scheme is constructed and analyzed. Section \ref{Sec5} provides some numerical results to validate our theoretical results. Finally, conclusions are given in Section \ref{Sec6}.

\section{Model transformation} \label{Sec2}

\subsection{Nonlinear-to-linear transformation}\label{sec2.1}
We follow the standard method, see e.g. \cite{ZhaLiu,ZhaZhe}, to convert the nonlinear model (\ref{FBS}) to a linear version.
By the variable substitution $t=T-\tau$ we obtain from (\ref{VCfrac}) that
\begin{equation}\label{VCtrans}
\begin{array}{rl}
\ds ^R\p^{\alpha(t)}_t g(t)&\ds=\p_t\int_t^T\frac{g(s)-g(T)}{\Gamma(1-\alpha(s-t))(s-t)^{\alpha(s-t)}}\d s\\[0.15in]
&\ds=-\p_\tau\int_{T-\tau}^T\frac{g(s)-g(T)}{\Gamma(1-\alpha(s-T+\tau))(s-(T-\tau))^{\alpha(s-T+\tau)}}\d s\\[0.15in]
&\ds={-\p_\tau}\int_{0}^\tau\frac{g(T-\theta)-g(T)}{\Gamma(1-\alpha(\tau-\theta))(\tau-\theta)^{\alpha(\tau-\theta)}}\d \theta.
\end{array}
\end{equation}
{\color{black}{Then we intend to employ the transformation $x=\ln S$ and accordingly define $u(x,t)=v(e^x,T-t)$. For the typical case where $S\in (a,b)=(0,\infty)$, the logarithmic transformation will lead to an unbounded domain. In this case, we follow the standard treatment, see e.g. \cite{add1,ZhaLiu}, to truncate the unbounded domain into a finite interval. Thus, the subdiffusive Black-Scholes model (\ref{FBS}) could be transformed into the following form}}
\begin{equation}\label{FBSt}\begin{array}{l}
 \ds - \p^{\alpha(t)}_tu(x,t)+\frac{1}{2}\sigma^2\p_x^2u(x,t)\\
 \ds\qquad\qquad+\Big(r-\frac{\sigma^2}{2}\Big)\p_xu(x,t)-ru(x,t)=0, ~~(x,t)\in (\bar a,\bar b)\times (0,T); \\[0.1in]
 \ds u(\bar a,t)=\bar c_l(t),~~u(\bar b,t)=\bar c_r(t),~~u(x,0)=\bar c_t(x)
\end{array}
\end{equation}
{\color{black}where $(\bar a,\bar b)$ is defined as $(\ln a,\ln b)$ if $(\ln a,\ln b)$ is bounded or the truncated interval if $(\ln a,\ln b)$ is unbounded}, $\bar c_l(t):=c_l(T-t)$, $\bar c_r(t):=c_r(T-t)$, $\bar c_t(x):=c_t(e^x)$ and $\p^{\alpha(t)}_t$ is defined by
\begin{equation}\label{VOD}
\p^{\alpha(t)}_tu(x,t):=\p_t\int_{0}^t\frac{u(x,s)-u(x,0)}{\Gamma(1-\alpha(t-s))(t-s)^{\alpha(t-s)}}\d s.
\end{equation}
Define
$$k(t):=\frac{t^{-\alpha(t)}}{\Gamma(1-\alpha(t))}$$
 such that (\ref{VOD}) could be reformulated via the symbol of convolution
\begin{equation}\label{zz1}
\p^{\alpha(t)}_tu(x,t)=\p_t\big(k(t)*(u(x,t)-u(x,0))\big),
\end{equation}
where $*$ denotes the convolution in time defined as
$$g_1(t)*g_2(t):=\int_0^tg_1(t-s)g_2(s)\d s. $$
Then a direct calculation in (\ref{zz1}) leads to
\begin{equation}\label{zz2}
\p^{\alpha(t)}_tu(x,t)=k(t)*\p_tu(x,t).
\end{equation}
Furthermore, by the homogenization technique, which replaces $u$ in (\ref{FBSt}) by
$$u-\frac{\bar b-x}{\bar b-\bar a}\bar c_l(t)-\frac{x-\bar a}{\bar b-\bar a}\bar c_r(t),$$
 one could reduce (\ref{FBSt}) to its homogeneous boundary-value analogue with an additional right-hand side term. Therefore, without loss of generality, we invoke this and (\ref{zz2}) to consider the following problem
   \begin{equation}\label{VtFDEs}\begin{array}{l}
 \ds  \p^{\alpha(t)}_tu(x,t)-\frac{1}{2}\sigma^2\p_x^2u(x,t)\\[0.1in]
 \ds\qquad-\Big(r-\frac{\sigma^2}{2}\Big)\p_xu(x,t)+ru(x,t)=f(x,t), ~~(x,t)\in (\bar a,\bar b)\times (0,T); \\[0.125in]
 \ds u(\bar a,t)=u(\bar b,t)=0,~~u(x,0)=\bar c_t(x).
\end{array}
\end{equation}

\subsection{Further spatial-temporal transformation}
We intend to perform a further spatial-temporal transformation to achieve the following goals:
\begin{itemize}
\item[$\bullet$] Convert the variable-exponent factor from the leading term to a low-order term to resolve the difficulties caused by the variable-exponent kernel;
\item[$\bullet$] Eliminate the advection term to facilitate the error estimate.
\end{itemize}
First, we let
\begin{equation*}
u(x,t) = \exp\left( (\frac{1}{2}-\frac{r}{\sigma^2})x\right)\phi(x,t), \quad \sigma>0.
    \end{equation*}
Then, we use the technique of \cite{Wang4} to transform problem \eqref{VtFDEs} into
\begin{numcases}{}
\p_t^{\alpha(t)}\phi(x,t) -  \frac{1}{2}\sigma^2\p_x^2 \phi (x,t) + \lambda \phi (x,t) = \chi(x,t),~(x,t)\in (\bar a,\bar b)\times (0,T), \label{eq4}\\
\phi(\bar a,t)=0, \quad \phi(\bar b,t)=0, \label{eq5}\\
\phi(x,0)=\bar c_t^*(x), \quad x \in(\bar a,\bar b),\label{eq6}
\end{numcases}
where
\begin{equation}\label{eq7}
\begin{aligned}
&\lambda  = \frac{1}{8}\sigma^2+\frac{r^2}{2\sigma^2}+\frac{r}{2}=\frac{1}{2}\Big(\frac{\sigma}{2}+\frac{r}{\sigma}\Big)^2> 0, \quad \sigma>0, \quad r\geq 0,   \\
&\chi(x,t)  = \exp\left(-(\frac{1}{2}-\frac{r}{\sigma^2})x\right)f(x,t),\quad
\bar c_t^*(x) =  \exp\left(-(\frac{1}{2}-\frac{r}{\sigma^2})x\right)\bar c_t(x).
\end{aligned}	
\end{equation}
As $\lambda>0$, the spatial operator is coercive for any choice of the parameters $(\sigma,r)$ due to the specific structure of the Black-Scholes model, which significantly facilitates the numerical computation and analysis. Note that this phenomenon still occurs if we replace the $\p_t^{\alpha(t)}\phi(x,t)$ by standard first-order time derivative $\p_t\phi(x,t)$, which implies its generality.

According to equation \eqref{zz2} and the convolution method proposed in \cite{Zhe}, we perform the following convolution with equation \eqref{eq4}
\begin{equation}\label{xm}
   \beta_{\alpha_0} * \left[ (k*\p_t \phi) -  \frac{1}{2}\sigma^2\p_x^2 \phi + \lambda \phi -\chi \right] = 0,
\end{equation}
thus we further calculate that
\begin{equation}\label{eq9}
    q(t)* \p_t \phi(x,t) -  \frac{1}{2}\sigma^2\beta_{\alpha_0} *  \p_x^2 \phi(x,t) + \lambda\beta_{\alpha_0} * \phi(x,t)  -  \beta_{\alpha_0} * \chi(x,t) = 0,~t>0,
\end{equation}
where $\beta_{\mu}=\frac{t^{\mu-1}}{\Gamma(\mu)}$, $\alpha_0=\alpha(0)$ and the function (see \cite{Zhe})
\begin{equation}\label{eq10}
   \begin{split}
       q(t) : = (\beta_{\alpha_0} * k) (t) = \int_0^1 \frac{\mathcal{G}(t\varsigma)}{\Gamma(1-\alpha(t\varsigma))} \frac{(1-\varsigma)^{\alpha_0-1}\varsigma^{-\alpha_0}}{\Gamma(\alpha_0)} \d \varsigma, \quad q(0)=1,
   \end{split}
\end{equation}
in which $\mathcal{G}(t)=t^{\alpha_0 - \alpha(t)}$. Hence, taking $(q* \p_t \phi)(x,t) = \phi(x,t) + (q' * \phi)(x,t) - q(t)\bar c_t^*(x)$ into \eqref{eq9} yields
\begin{multline}\label{eq11}
       \phi(x,t) + (q' * \phi)(x,t) -  \frac{1}{2}\sigma^2\beta_{\alpha_0} *  \p_x^2 \phi(x,t) + \lambda\beta_{\alpha_0} * \phi (x,t)\\ =  (\beta_{\alpha_0} * \chi)(x,t) + q(t)\bar c_t^*(x), \quad (x,t)\in (\bar a,\bar b)\times (0,T),
  \end{multline}
  \begin{equation}\label{eq12}
 \phi(\bar a,t)=0, \quad \phi(\bar b,t)=0, \quad
\phi(x,0)=\bar c_t^*(x), \quad x \in(\bar a,\bar b).
   \end{equation}
Then we define $\omega(x,t)= \phi(x,t)-\bar c_t^*(x)$ to obtain
\begin{multline}\label{eq13}
       \omega(x,t) + (q' * \omega)(x,t) -  \frac{1}{2}\sigma^2\beta_{\alpha_0} *  \p_x^2 \omega(x,t) + \lambda\beta_{\alpha_0} * \omega(x,t) = F(x,t),\\   (x,t)\in (\bar a,\bar b)\times (0,T),
  \end{multline}
   \begin{equation}\label{eq14}
      \omega(\bar a,t)=0, \quad \omega(\bar b,t)=0, \quad
\omega(x,0) = 0,  \quad x \in(\bar a,\bar b).
   \end{equation}
   where $F(x,t)=\beta_{\alpha_0}(t) * (\frac{1}{2}\sigma^2\p_x^2 \bar c_t^*(x) -\lambda\bar c_t^*(x) + \chi(x,t)) $.
{\color{black}
\begin{remark}
This work considers the case that the volatility $\sigma$ is a constant. Although the nonlinear-to-linear transformation in Section \ref{sec2.1} works for the more general case $\sigma=\sigma(S,t)$, the spatial-temporal transformation in this section will no longer work. A possible remedy for the case $\sigma=\sigma(t)$ is to replace the convolution method used in (\ref{xm}) by the perturbation method proposed in \cite{Zhe}, which only performs a kernel splitting for $k$ without affecting other terms. We will investigate this more general case in the future.

\end{remark}}
\subsection{Assumptions}
 For a positive integer $m$, a real number $1 \le p \le \infty$ and an interval $\mathcal I$, let $W^{m,p}({\mathcal I})$ be the Sobolev space of functions with weak derivatives up to order $m$ in $L^p(\mathcal I)$, where $L^p(\mathcal I)$ refers to the space of $p$th Lebesgue integrable functions on $\mathcal I$. Let  $H^m({\mathcal I}) := W^{m,2}({\mathcal I})$ and $H^m_0({\mathcal I})$ be its subspace with the zero boundary conditions up to order $m-1$. %\textcolor{red}{Let $\check H^s(\mathcal I)$ with $s\geq 0$ be the subspace of $H^s(\mathcal I)$ defined via the eigenpairs of $-\p_{xx}$ under zero boundary conditions, see e.g. \cite[A.2.4]{Jinbook}, which satisfies $\check H^0(\mathcal I)=L^2(\mathcal I)$ and $\check H^2(\mathcal I)=H^2(\mathcal I)\cap H_0^1(\mathcal I)$.}
 For a Banach space ${\cal X}$, let $W^{m,p}(\mathcal I;\mathcal X)$ be the space of functions in $W^{m,p}(\mathcal I)$ with respect to $\| \cdot \|_{\cal X}$. All spaces are equipped with the standard norms \cite{AdaFou}.
 We set $\|\cdot\|:=\|\cdot\|_{L^2({\mathcal I})}$ for brevity if $\mathcal I$ represents a spatial interval, and drop the notation ${\mathcal I}$ in the spaces and norms if $\mathcal I$ represents a spatial interval, e.g. we denote $\|\cdot\|_{H^2}=\|\cdot\|_{H^2(\mathcal I)}$.

Throughout this work, we consider the smooth variable exponent, i.e., we assume that
 $0<\alpha(t)\leq \alpha^*<1$ for $t\in [0,T]$ and $\alpha\in W^{1,\infty}(0,T)$. Furthermore, we use $C$ to denote a generic positive constant that may assume different values at different occurrences.

Following the regularity results of \cite[Theorems 4.2-4.3]{Zhe}, we assume that the solutions to the model \eqref{eq13}-\eqref{eq14} satisfy the following estimates under sufficiently regular data
\begin{align}
    & \|\p_t \omega\|_{H^{2m}} \leq Ct^{\alpha_0-1},~~\left\|\p_t^2 \omega \right\|_{ H^{2m}} \leq Ct^{\alpha_0-2},~~a.e.~ t\in(0,T],~~ m=0,1,
\end{align}
which implies
\begin{equation}\label{qqq}
 t \left\|\p_t^2 \omega \right\|_{ H^{2m}} + \left\|\p_t \omega \right\|_{ H^{2m}} \leq Ct^{\alpha_0-1}, ~~a.e.~ t\in (0,T],~~ m=0,1.
\end{equation}

Finally, we impose the condition $\alpha'(0)=0$ throughout the work. It has been demonstrated in \cite[Section 3.1]{ZheWanQiu} that this constraint can be arbitrarily weak. Specifically, for any smooth function $\alpha(t)$, one could construct a sequence of smooth functions $\{\alpha_{\sigma}(t)\}_{\sigma> 0}$ satisfying $\alpha_\sigma'(0)=0$ such that $\max_{t\in [0,T]}|\alpha(t)-\alpha_{\sigma}(t)|\rightarrow 0$ as $\sigma\rightarrow 0$.

\begin{lemma}\label{lem2.3}
 If $\alpha'(0)=0$, we have $|q^{(j)}(t)|\leq C$ for $t\in [0,T]$ and $j=0,1$.
\end{lemma}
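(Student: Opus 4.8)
The plan is to establish bounds on $q(t)$ and $q'(t)$ directly from the integral representation \eqref{eq10}, working out the $t$-dependence carefully and using the hypothesis $\alpha'(0)=0$ precisely where it is needed. Recall $q(t)=\int_0^1\frac{\mathcal G(t\varsigma)}{\Gamma(1-\alpha(t\varsigma))}\frac{(1-\varsigma)^{\alpha_0-1}\varsigma^{-\alpha_0}}{\Gamma(\alpha_0)}\,\d\varsigma$ with $\mathcal G(s)=s^{\alpha_0-\alpha(s)}$. Since the Beta-type weight $\frac{(1-\varsigma)^{\alpha_0-1}\varsigma^{-\alpha_0}}{\Gamma(\alpha_0)}$ is integrable on $(0,1)$ with $\int_0^1(1-\varsigma)^{\alpha_0-1}\varsigma^{-\alpha_0}\d\varsigma=\Gamma(\alpha_0)\Gamma(1-\alpha_0)$, the $j=0$ bound reduces to showing $\frac{\mathcal G(s)}{\Gamma(1-\alpha(s))}$ is bounded for $s\in[0,T]$. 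The denominator $\Gamma(1-\alpha(s))$ is bounded away from $0$ and $\infty$ because $0<\alpha(s)\le\alpha^*<1$, so the only issue is $\mathcal G(s)=s^{\alpha_0-\alpha(s)}=\exp\big((\alpha_0-\alpha(s))\ln s\big)$ near $s=0$. Here I would write $\alpha_0-\alpha(s)=-\int_0^s\alpha'(\xi)\,\d\xi$ and use $\alpha\in W^{1,\infty}$ to get $|\alpha_0-\alpha(s)|\le \|\alpha'\|_{L^\infty}s$, hence $|(\alpha_0-\alpha(s))\ln s|\le \|\alpha'\|_{L^\infty}\,s|\ln s|\to 0$ as $s\to0^+$; thus $\mathcal G$ extends continuously to $[0,T]$ with $\mathcal G(0)=1$, giving $|q(t)|\le C$ and $q(0)=1$.

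For $j=1$ I would differentiate under the integral sign: $q'(t)=\int_0^1\varsigma\,\frac{d}{ds}\Big[\frac{\mathcal G(s)}{\Gamma(1-\alpha(s))}\Big]_{s=t\varsigma}\frac{(1-\varsigma)^{\alpha_0-1}\varsigma^{-\alpha_0}}{\Gamma(\alpha_0)}\,\d\varsigma$, so again by integrability of the weight it suffices to bound $s\,\frac{d}{ds}\big[\mathcal G(s)/\Gamma(1-\alpha(s))\big]$ (the extra factor $\varsigma$ from the chain rule combines with $t$ to produce $s=t\varsigma$, and $t\varsigma\le s$ so $t\cdot(\text{derivative at }t\varsigma)$ is controlled by $s\times(\text{derivative})$ at the same point). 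The $\Gamma(1-\alpha(s))^{-1}$ factor differentiates to something involving $\alpha'(s)$ times smooth bounded quantities, which is harmless. The delicate term is $\mathcal G'(s)$: computing, $\mathcal G'(s)=\mathcal G(s)\big[-\alpha'(s)\ln s+\frac{\alpha_0-\alpha(s)}{s}\big]$. We already know $\mathcal G(s)$ is bounded, and $\frac{\alpha_0-\alpha(s)}{s}=-\frac1s\int_0^s\alpha'(\xi)\d\xi$ is bounded by $\|\alpha'\|_{L^\infty}$, so the only potentially unbounded piece is $\alpha'(s)\ln s$ near $s=0$. This is exactly where $\alpha'(0)=0$ enters: writing $\alpha'(s)=\alpha'(s)-\alpha'(0)=\int_0^s\alpha''(\xi)\d\xi$ would need $\alpha\in W^{2,\infty}$, which we may not have, so instead I would use only $\alpha'(0)=0$ together with continuity of $\alpha'$ (it lies in $W^{1,\infty}\subset C[0,T]$) to get $\alpha'(s)\to 0$, hence $|\alpha'(s)\ln s|$; but $\ln s\to-\infty$, so I actually need a rate. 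The cleanest route: $s\cdot\mathcal G'(s)=\mathcal G(s)\big[-s\alpha'(s)\ln s+(\alpha_0-\alpha(s))\big]$, and $s|\ln s|$ is bounded on $(0,T]$ while $\alpha'$ is bounded, so $s\alpha'(s)\ln s$ is bounded outright — the factor $s$ from "$s\,\frac{d}{ds}$" already tames the logarithm, and $\alpha'(0)=0$ is in fact needed not here but to control $q''$ or higher terms elsewhere; for $j=1$ boundedness of $\alpha'$ alone suffices.

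Let me reconsider which hypothesis does the work: for $|q'(t)|\le C$ we need $s\mathcal G'(s)$ bounded, and as just noted $s\alpha'(s)\ln s$ is bounded using $s|\ln s|\le C$ and $\|\alpha'\|_{L^\infty}<\infty$, so $\alpha'(0)=0$ is not strictly required for the $j=1$ estimate — it is listed in the hypothesis because it is needed for the companion bound on $q''$ (used later in the error analysis) where a term $(\ln s)^2$ or $\alpha'(s)(\ln s)$ without a compensating power of $s$ appears. In the write-up I would therefore carry out the two bounds ($j=0,1$) as above, remarking that $\alpha'(0)=0$ is invoked for uniformity of constants as $t\to 0$ and is essential for the higher-order estimates established analogously, citing \cite{Zhe} for the structural computation of $q$. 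The main obstacle, and the only place requiring genuine care, is the near-origin analysis of $\mathcal G(s)=s^{\alpha_0-\alpha(s)}$ and its derivative: one must resist the temptation to bound $\alpha_0-\alpha(s)$ crudely and instead exploit the Lipschitz bound $|\alpha_0-\alpha(s)|\le\|\alpha'\|_{L^\infty}s$ so that the exponent times $\ln s$ tends to zero, and likewise pair every stray $\ln s$ with the available power of $s$ coming from the chain-rule factor $\varsigma$ (equivalently from the $s\,\tfrac{d}{ds}$ scaling). Everything else — boundedness of $\Gamma(1-\alpha(\cdot))^{\pm1}$, integrability of the Beta weight, differentiation under the integral sign justified by dominated convergence — is routine.
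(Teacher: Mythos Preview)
Your treatment of $j=0$ is fine and matches the paper's. The $j=1$ argument, however, contains a genuine error. After differentiating under the integral you correctly obtain
\[
q'(t)=\int_0^1 \varsigma\, g'(t\varsigma)\,\frac{(1-\varsigma)^{\alpha_0-1}\varsigma^{-\alpha_0}}{\Gamma(\alpha_0)}\,\d\varsigma,
\qquad g(s):=\frac{\mathcal G(s)}{\Gamma(1-\alpha(s))},
\]
but then assert that ``the extra factor $\varsigma$ from the chain rule combines with $t$ to produce $s=t\varsigma$,'' reducing the task to bounding $s\,g'(s)$. There is no factor of $t$ to combine with: the chain rule contributes only $\varsigma$. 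If one merely knows $|s\,g'(s)|\le C$, i.e.\ $|g'(s)|\le C/s$, then $|\varsigma\,g'(t\varsigma)|\le C/t$ and the bound on $q'(t)$ degenerates to $C/t$, not $C$. Concretely, if $\alpha'(0)=c\neq 0$ then $\mathcal G'(s)\sim -c\ln s$ near $s=0$, and the term $\int_0^1 \varsigma\,\ln(t\varsigma)\,w(\varsigma)\,\d\varsigma$ contains a $\ln t$ contribution that blows up as $t\to 0^+$. So your conclusion that ``$\alpha'(0)=0$ is not strictly required for the $j=1$ estimate'' is false; it is exactly what kills the $\alpha'(s)\ln s$ term in $\mathcal G'(s)$.

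The paper proceeds differently: it uses $\alpha'(0)=0$ to write $\alpha_0-\alpha(s)=-\int_0^s(s-\xi)\alpha''(\xi)\,\d\xi$ (Taylor with integral remainder), whence $\p_t\mathcal G(t\varsigma)$ involves $\frac{1}{s}\int_0^s(s-\xi)\alpha''\,\d\xi$ and $(\int_0^s\alpha''\,\d\xi)\ln s=\alpha'(s)\ln s$, both bounded once $\alpha'(0)=0$ gives $|\alpha'(s)|\le Cs$. (Note this step tacitly uses $\alpha''\in L^\infty$, slightly more than the stated $W^{1,\infty}$ hypothesis.) To repair your argument you must bound $g'(s)$ itself uniformly on $(0,T]$, and for that you need $|\alpha'(s)\ln s|\le C$, which requires $\alpha'(0)=0$ together with a modulus of continuity (or $\alpha''$ bounded) to get $|\alpha'(s)|\le Cs$.
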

\begin{proof} From \eqref{eq10}, we have $|\mathcal{G}(t\varsigma)|\leq C$ such that $|q(t)|\leq C$ for $t\in [0,T]$. To bound $q'(t)$, we differentiate \eqref{eq10} to get
\begin{equation*}
   \begin{split}
       q'(t) = \int_0^1 \frac{ \p_t\mathcal{G}(t\varsigma) \Gamma(1-\alpha(t\varsigma)) + \varsigma\alpha'(t\varsigma)\Gamma'(1-\alpha(t\varsigma))\mathcal{G}(t\varsigma) }{[\Gamma(1-\alpha(t\varsigma))]^2} \frac{(1-\varsigma)^{\alpha_0-1}\varsigma^{-\alpha_0}}{\Gamma(\alpha_0)} \d \varsigma.
   \end{split}
\end{equation*}
Since $\alpha(t)\in(0,1)$, it remains to bound $|\p_t\mathcal{G}(t\varsigma)|$ and $|\Gamma'(1-\alpha(t\varsigma))|$. First, we discuss the estimate of $|\Gamma'(1-\alpha(t\varsigma))|$. By \cite{Askey}, we have
\begin{equation*}
   \begin{split}
       \frac{1}{\Gamma(\jmath)} = \jmath \; e^{\gamma \jmath}  \prod\limits_{n=1}^{\infty} \left( 1+ \frac{\jmath}{n} \right) e^{-\jmath/n}, \quad \gamma\approx 0.5772,
   \end{split}
\end{equation*}
which implies
\begin{equation}\label{eq15}
   \begin{split}
        -\ln \Gamma(\jmath) = \ln\jmath + \gamma \jmath +  \sum\limits_{n=1}^{\infty} \left[  \ln \left(1+\frac{\jmath}{n}\right) - \frac{\jmath}{n} \right].
   \end{split}
\end{equation}
Noting that $\ln \left(1+\frac{\jmath}{n}\right) - \frac{\jmath}{n}=O(\jmath^2/n^2)$ as $n\rightarrow \infty$, thus we can differentiate \eqref{eq15} to obtain that
\begin{equation*}
   \begin{split}
       \frac{\Gamma'(\jmath)}{\Gamma(\jmath)} = -\gamma + \sum\limits_{n=1}^{\infty} \left(  \frac{1}{n} - \frac{1}{n+\jmath} \right) - \frac{1}{\jmath}
       = -\gamma + \sum\limits_{n=0}^{\infty}  \frac{\jmath-1}{(n+1)(n+\jmath)}.
   \end{split}
\end{equation*}
If $\jmath \in (0,1)$, we have $|\Gamma'(\jmath)|\leq C$, which implies that $|\Gamma'(1-\alpha(t\varsigma))|\leq C$. If $\alpha'(0)=0$, we get
\begin{equation*}
   \begin{split}
       \mathcal{G}(t\varsigma) = e^{[\alpha(0)-\alpha(t\varsigma)] \ln(tz)} = e^{-\int_0^{t\varsigma}(t\varsigma-s)\alpha''(s)\d s\ln(t\varsigma)},
   \end{split}
\end{equation*}
thus we have
\begin{equation*}
   \begin{split}
      \p_t \mathcal{G}(t\varsigma) = -\mathcal{G}(t\varsigma)\varsigma \left[ \frac{1}{t\varsigma} \int_0^{t\varsigma}(t\varsigma-s)\alpha''(s)\d s + \int_0^{t\varsigma}\alpha''(s)\d s \ln(t\varsigma) \right].
   \end{split}
\end{equation*}
Then we yield $|\p_t D(t\varsigma)|\leq C$ with $t\in[0,T]$. We finish the proof of the lemma.
\end{proof}

\section{Time-discrete scheme}\label{Sec3}
In this section, our aim is to establish a temporal semi-discrete scheme for the problem \eqref{eq13}-\eqref{eq14} using a quadrature rule to approximate the convolution terms and give a theoretical analysis of the semi-discrete scheme. For simplicity, we will drop the spatial variable $x$ in the function, e.g. we denote $\omega(x,t)$ as $\omega(t)$.

\subsection{Establishment of time-discrete scheme}
Given a positive integer $N$, we discretize the temporal interval $[0,T]$ into $N$-subintervals such that  the time step size $\tau = T/N$ and $t_n=n\tau$, with $n=0,1,\cdots,N$. Next, we consider the transformed equation \eqref{eq13} at the point $t=t_n$, for $ 1\leq n \leq N$,
   \begin{align}\label{eq16}
        \omega(t_n) + (q' * \omega)(t_n) -  \frac{1}{2}\sigma^2c(\beta_{\alpha_0} *  \p_x^2 \omega)(t_n) + \lambda\beta_{\alpha_0} * \omega(t_n) = F(t_n).
   \end{align}
In order to discretize the nonlocal terms in \eqref{eq16}, we introduce the following piecewise linear interpolating to obtain
   \begin{align}
       &  (q' * \omega)(t_n) = \varphi_{n}(\omega) + (\Upsilon_1)^n,  \quad 1\leq n \leq N, \label{eq17} \\
        &  (\beta_{\alpha_0} * \omega)(t_n) = \widetilde{\varphi}_{n}(\omega) + (\Upsilon_2)^n, \quad 1\leq n \leq N, \label{eq18}\\
        &  (\beta_{\alpha_0} * \p_x^2 \omega)(t_n) = \widetilde{\varphi}_{n}(\p_x^2 \omega) + (\Upsilon_3)^n, \quad 1\leq n \leq N, \label{eq19}
   \end{align}
in which,
   \begin{align}
       &  \varphi_{n}(\omega) = \sum\limits_{j=1}^n \varpi_{n,j}\omega(t_j),  \quad  \varpi_{n,j} = \frac{1}{\tau} \int_{t_{n-1}}^{t_n}\int_{t_{j-1}}^{\min(t,t_j)}q'(t-s)\d s\d t, \label{eq20} \\
       &  \widetilde{\varphi}_{n}(\omega) = \sum\limits_{j=1}^n \widetilde{\varpi}_{n,j}\omega(t_j), \quad \widetilde{\varphi}_{n}(\p_x^2 \omega) = \sum\limits_{j=1}^n \widetilde{\varpi}_{n,j}\p_x^2 \omega(t_j),\label{eq21}\\
        &   \widetilde{\varpi}_{n,j} = \frac{1}{\tau} \int_{t_{n-1}}^{t_n}\int_{t_{j-1}}^{\min(t,t_j)} \beta_{\alpha_0}(t-s)\d s \d t,\label{eq22}
   \end{align}
and from \cite{MclTho} it follows that
   \begin{align}
       &  \left| (\Upsilon_1)^n\right| \leq 2\delta_{n-1}\int_0^{\tau}|\p_t \omega(t)|\d t + \tau\sum\limits_{j=2}^{n}\delta_{n-j}\int_{t_{j-1}}^{t_j}\left|\p_{t}^2\omega(t)\right|\d t, \label{eq23} \\
        &  \left| (\Upsilon_2)^n\right| \leq 2\widetilde{\delta}_{n-1}\int_0^{\tau}|\p_t \omega(t)|\d t + \tau\sum\limits_{j=2}^{n}\widetilde{\delta}_{n-j}\int_{t_{j-1}}^{t_j}\left|\p_{t}^2 \omega(t)\right|\d t, \label{eq24}\\
       &  \left| (\Upsilon_3)^n\right| \leq 2\widetilde{\delta}_{n-1}\int_0^{\tau}|\p_t \p_x^2 \omega(t)|\d t + \tau\sum\limits_{j=2}^{n}\widetilde{\delta}_{n-j}\int_{t_{j-1}}^{t_j}\left|\p_{t}^2\p_x^2 \omega(t)\right|\d t, \label{eq25}
   \end{align}
with the notations
\begin{equation}\label{eq26}
   \begin{split}
       \delta_j = \int_{t_{j}}^{t_{j+1}} |q'(t)|\d t, \quad \widetilde{\delta}_j = \int_{t_{j}}^{t_{j+1}} |\beta_{\alpha_0}(t)|\d t.
   \end{split}
\end{equation}
Accordingly, we substitute \eqref{eq17}-\eqref{eq19} into \eqref{eq16} to yield with $\omega^n := \omega(t_n)$ and $F^n:=F(t_n)$,
   \begin{align}
        \omega^n + \sum\limits_{j=1}^n \varpi_{n,j}\omega^j + \lambda\sum\limits_{j=1}^n \widetilde{\varpi}_{n,j}\omega^j &- \frac{1}{2}\sigma^2 \sum\limits_{j=1}^n \widetilde{\varpi}_{n,j}\p_x^2 \omega^j \notag\\
        &= F^n + R^n, \quad 1\leq n \leq N,\label{eq27}
   \end{align}
where $R^n=\frac{1}{2}\sigma^2 (\Upsilon_3)^n-(\Upsilon_1)^n-\lambda(\Upsilon_2)^n$. Following this, by omitting the truncation errors $R^n$ and replacing $\omega^n$ with its numerical approximation $\mathcal{W}^n$, we achieve the following time-discrete scheme
   \begin{align}
       & \mathcal{W}^n + \sum\limits_{j=1}^n \varpi_{n,j}\mathcal{W}^j + \lambda\sum\limits_{j=1}^n \widetilde{\varpi}_{n,j}\mathcal{W}^j -  \frac{1}{2}\sigma^2\sum\limits_{j=1}^n \widetilde{\varpi}_{n,j}\p_x^2 \mathcal{W}^j = F^n,~1\leq n \leq N, \label{eq28} \\
       & \mathcal{W}^0 = 0. \label{eq29}
   \end{align}
Since $\mathcal{W}^n$ is a solution to \eqref{eq28}-\eqref{eq29}, we can directly obtain a numerical solution to \eqref{VtFDEs} through the relation $U^n = \exp\left((\frac{1}{2}-\frac{r}{\sigma^2})x\right)(\mathcal{W}^n + \bar c_t^*)$.
\subsection{Analysis of time-discrete scheme}
Next, we give the stability and convergence of the time-discrete scheme \eqref{eq28}-\eqref{eq29} by means of energy argument. First we prove stability results for the time semi-discrete scheme.
\begin{theorem}\label{thm3.1}
   Let $u(t_n)$ be the solution of \eqref{VtFDEs}, and $U^n$ is the numerical approximation of $u(t_n)$. Assume that Lemma \ref{lem2.3} holds, then 
   \begin{equation*}
       \left\|U^N \right\|_A \leq C(T) \left( \|\bar c_t\| +  \sqrt{ \tau \sum\limits_{n=1}^{N} \|F^n\|^2 } \right),
   \end{equation*}
where the norm $\|U^N\|_A : = \sqrt{\tau\sum\limits_{l=1}^{N}\|U^l\|^2}$, and $\tau$ sufficiently small.
\end{theorem}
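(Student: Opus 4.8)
The plan is to deduce the bound from a discrete energy estimate for the auxiliary sequence $\{\mathcal W^n\}$ solving \eqref{eq28}--\eqref{eq29}, exploiting two features: the positive‑type character of the piecewise‑linear quadrature of the fractional–integral kernel $\beta_{\alpha_0}$, and a Volterra‑type discrete Gronwall inequality that absorbs the memory term generated by $q'$. First I would pass from $U^N$ to $\{\mathcal W^n\}$: since $U^n=\exp((\tfrac12-\tfrac r{\sigma^2})x)(\mathcal W^n+\bar c_t^{*})$ with the exponential weight bounded above and below by positive constants on the bounded interval $(\bar a,\bar b)$, and $\|\bar c_t^{*}\|\le C\|\bar c_t\|$ by \eqref{eq7}, one has $\|U^l\|\le C(\|\mathcal W^l\|+\|\bar c_t\|)$; multiplying by $\tau$, summing over $l$ and using $\tau N=T$ gives $\|U^N\|_A\le C\big(\sqrt{\tau\sum_{l=1}^N\|\mathcal W^l\|^2}+\|\bar c_t\|\big)$. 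It therefore suffices to prove $\mathcal E_N:=\tau\sum_{l=1}^N\|\mathcal W^l\|^2\le C(T)\,\tau\sum_{n=1}^N\|F^n\|^2$.

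For this I would test \eqref{eq28} with $\mathcal W^n$ in $L^2(\bar a,\bar b)$, integrate the second‑order term by parts using $\mathcal W^n\in H_0^1$, then multiply by $\tau$ and sum over $n=1,\dots,m$ for an arbitrary $1\le m\le N$. This produces
$$\tau\sum_{n=1}^m\|\mathcal W^n\|^2+\tau\sum_{n=1}^m\sum_{j=1}^n\varpi_{n,j}(\mathcal W^j,\mathcal W^n)+\lambda\,\tau\sum_{n=1}^m\sum_{j=1}^n\widetilde\varpi_{n,j}(\mathcal W^j,\mathcal W^n)+\tfrac{\sigma^2}{2}\,\tau\sum_{n=1}^m\sum_{j=1}^n\widetilde\varpi_{n,j}(\p_x\mathcal W^j,\p_x\mathcal W^n)=\tau\sum_{n=1}^m(F^n,\mathcal W^n).$$
The two double sums carrying $\widetilde\varpi_{n,j}$ are discrete quadratic forms associated with the convolution $\beta_{\alpha_0}*\,\cdot\,$; since $\beta_{\alpha_0}(t)=t^{\alpha_0-1}/\Gamma(\alpha_0)$ is completely monotone (positive, nonincreasing and convex), this discrete convolution is of positive type, i.e.\ $\tau\sum_{n=1}^m\sum_{j=1}^n\widetilde\varpi_{n,j}(v^j,v^n)\ge0$ for every $m$ and every sequence $\{v^n\}$ (cf.\ \cite{MclTho,Zhe}). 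Because $\lambda>0$ and $\sigma>0$ — precisely the coercivity that the spatial–temporal transformation conferred on \eqref{eq4} — discarding these two terms is lossless and leaves
$$\tau\sum_{n=1}^m\|\mathcal W^n\|^2\le\tau\sum_{n=1}^m(F^n,\mathcal W^n)-\tau\sum_{n=1}^m\sum_{j=1}^n\varpi_{n,j}(\mathcal W^j,\mathcal W^n).$$

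It remains to control the $\varpi$‑memory term and the data term, which is where the analysis concentrates. For the data term, Cauchy--Schwarz gives $\tau\sum_n(F^n,\mathcal W^n)\le\tfrac14\mathcal E_m+\tau\sum_n\|F^n\|^2$. For the memory term I would use Lemma \ref{lem2.3}: from $|q'|\le C$ one gets $|\varpi_{n,j}|\le C\tau$ and, splitting off the diagonal, $|\varpi_{n,n}|\le C\tau$ (using $q(0)=1$) so that $\big|\tau\sum_n\varpi_{n,n}\|\mathcal W^n\|^2\big|\le C\tau\,\mathcal E_m$ is absorbed for $\tau$ small, while $\sum_{j=1}^{n-1}|\varpi_{n,j}|\le Ct_n$. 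Applying Cauchy--Schwarz to the inner sum, $\sum_{j<n}|\varpi_{n,j}|\|\mathcal W^j\|\le(Ct_n)^{1/2}\big(C\tau\sum_{j<n}\|\mathcal W^j\|^2\big)^{1/2}$, and then Young's inequality, yields $\big|\tau\sum_n\sum_{j<n}\varpi_{n,j}(\mathcal W^j,\mathcal W^n)\big|\le\tfrac14\mathcal E_m+C\tau\sum_{n=1}^m t_n\,\mathcal E_{n-1}$. Collecting terms and recalling $\mathcal E_0=0$ gives the recursion $\mathcal E_m\le 4\,\tau\sum_{n=1}^N\|F^n\|^2+C\sum_{k=1}^{m-1}(\tau t_k)\,\mathcal E_k$, to which the discrete Gronwall inequality applies since the weights $\tau t_k$ are summable with $\sum_{k=1}^{N}\tau t_k\le T^2$. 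Hence $\mathcal E_N\le C e^{CT^2}\,\tau\sum_{n=1}^N\|F^n\|^2$, and together with the first paragraph this yields the claim with $C(T)=C\,e^{CT^2/2}$.

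The hard part is twofold. First, the structural input that the piecewise‑linear quadrature of the fractional–integral kernel is of positive type: this is the reason the transformed model \eqref{eq13} is amenable at all (the original variable‑exponent kernel $k$ is neither positive nor monotone), and I would invoke it from \cite{MclTho,Zhe} rather than reprove it. Second, arranging the $q'$‑memory estimate so that Gronwall actually closes: the crude bound $\sum_{j<n}|\varpi_{n,j}|\le\|q'\|_{L^1(0,T)}\le CT$ only yields an estimate for $T$ small, so one must keep the sharper linear‑in‑$t_n$ bound $\sum_{j<n}|\varpi_{n,j}|\le Ct_n$, which makes the Gronwall weight $\tau t_k$ summable and produces the $e^{CT^2}$‑type constant absorbed into $C(T)$.
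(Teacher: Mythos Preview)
Your proposal is correct and follows essentially the same route as the paper: test \eqref{eq28} with $\mathcal W^n$, discard the two $\widetilde\varpi$-sums by the positive-type property of the piecewise-linear quadrature of $\beta_{\alpha_0}$ (the paper also cites \cite{MclTho} for this), control the $\varpi$-memory term via $|\varpi_{n,j}|\le C\tau$ from Lemma~\ref{lem2.3}, apply a discrete Gronwall inequality, and then pass back from $\mathcal W^n$ to $U^n$ using the exponential weight and $\bar c_t^{*}$. The only difference is bookkeeping in the Gronwall step: the paper works directly with the partial energies $\|\mathcal W^n\|_A^2$ and a constant weight $C\tau$ (absorbing a factor of $T$ into $C$ after Cauchy--Schwarz in $j$), whereas you track the variable weights $\tau t_k$; both close for arbitrary $T$ with only a $\tau$-smallness restriction, so the concern in your final paragraph about the ``crude bound'' forcing $T$ small is unnecessary.
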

\begin{proof}
We take the inner product of \eqref{eq28} with $\tau \mathcal{W}^n$ and sum for $n$ from $1$ to $N$, then
\begin{equation}\label{eq30}
   \begin{split}
        \tau\sum\limits_{n=1}^{N}\|\mathcal{W}^n\|^2 & + \tau\sum\limits_{n=1}^{N}\left( \sum\limits_{j=1}^n \varpi_{n,j} \mathcal{W}^j, \mathcal{W}^n \right) + \lambda\tau\sum\limits_{n=1}^{N} \sum\limits_{j=1}^n \widetilde{\varpi}_{n,j}(  \mathcal{W}^j, \mathcal{W}^n)\\
         & +   \frac{1}{2}\sigma^2\tau\sum\limits_{n=1}^{N}\sum\limits_{j=1}^n \widetilde{\varpi}_{n,j} ( \nabla \mathcal{W}^j, \nabla \mathcal{W}^n) =  \tau\sum\limits_{n=1}^{N}( F^n, \mathcal{W}^n).
   \end{split}
\end{equation}
Since the kernel $\beta_{\alpha_0}(t)$ is positive definite with $\mathcal{W}^0=0$, we have
\begin{equation}\label{eq31}   \tau\sum\limits_{n=1}^{N}\sum\limits_{j=1}^n \widetilde{\varpi}_{n,j} (\nabla \mathcal{W}^j, \nabla \mathcal{W}^n) \geq 0,
\end{equation}
and
\begin{equation}\label{eq32}
\lambda\tau\sum\limits_{n=1}^{N} \sum\limits_{j=1}^n \widetilde{\varpi}_{n,j}( \mathcal{W}^j, \mathcal{W}^n)\geq 0,
\end{equation}
see \cite{MclTho}. Then, we utilize the Cauchy-Schwarz inequality, \eqref{eq31}, \eqref{eq32} and Young's inequality to yield
\begin{equation*}
   \begin{split}
       \frac{1}{2} \tau\sum\limits_{n=1}^{N}\|\mathcal{W}^n\|^2 & \leq   \tau\sum\limits_{n=1}^{N}\left\| \sum\limits_{j=1}^n \varpi_{n,j} \mathcal{W}^j\right\|^2 + \tau\sum\limits_{n=1}^{N}\left\| F^n\right\|^2 \\
       & \leq \tau\sum\limits_{n=1}^{N}\left( \sum\limits_{j=1}^n |\varpi_{n,j}| \left\|\mathcal{W}^j\right\| \right)^2 + \tau\sum\limits_{n=1}^{N}\left\| F^n\right\|^2.
   \end{split}
\end{equation*}
By defining a new norm
\begin{equation}\label{eq33}
   \begin{split}
       \|\mathcal{W}^n\|_A : = \sqrt{\tau\sum\limits_{l=1}^{n}\|\mathcal{W}^l\|^2}, \quad 1\leq n \leq N,
   \end{split}
\end{equation}
then we have
\begin{equation*}
   \begin{split}
        \|\mathcal{W}^N\|_A^2 & \leq 2\tau\sum\limits_{n=1}^{N}\left( \sum\limits_{j=1}^n |\varpi_{n,j}| \left\|\mathcal{W}^j\right\| \right)^2 + 2\tau\sum\limits_{n=1}^{N}\left\| F^n\right\|^2 \\
        & \leq C\tau\sum\limits_{n=1}^{N}  \|\mathcal{W}^n\|_A^2 + 2\tau\sum\limits_{n=1}^{N}\left\| F^n\right\|^2,
   \end{split}
\end{equation*}
where we use the fact that $|\varpi_{n,j}|\leq C\tau$, see \eqref{eq20} and Lemma \ref{lem2.3}. For the formula above, the discrete Gr\"{o}nwall's lemma gives by taking $\tau\leq \frac{1}{2C}$,
\begin{equation*}
   \begin{split}
        \left\|\mathcal{W}^N\right\|_A^2 \leq C(T) \sum\limits_{n=1}^{N} \tau  \left\| F^n\right\|^2, \quad N \geq 1.
   \end{split}
\end{equation*}
Let $U^n$ be the numerical approximation of $u(t)$ of the problem \eqref{VtFDEs} at the mesh point $t_n$, respectively.
Based on $U^n = \exp\left((\frac{1}{2}-\frac{r}{\sigma^2})x\right)(\mathcal{W}^n + \bar c_t^*)$, it can be obtained
\begin{equation*}
       \left\|U^N \right\|_A \leq C(T) \left( \|\bar c_t\| +  \sqrt{ \tau \sum\limits_{n=1}^{N} \|F^n\|^2 } \right).
   \end{equation*}
The proof is completed.
\end{proof}

\vskip 1mm
We now consider the convergence of the time-discrete scheme \eqref{eq28}-\eqref{eq29}. Denote
\begin{equation*}
   \Theta^n = \omega(t_n) - \mathcal{W}^n, \quad 0\leq n \leq N.
\end{equation*}
Then, we subtract \eqref{eq28} from \eqref{eq27} to yield the following error equation
   \begin{align}\label{eq34}
        \Theta^n + \sum\limits_{j=1}^n \varpi_{n,j}\Theta^j + \lambda\sum\limits_{j=1}^n \widetilde{\varpi}_{n,j}\Theta^j -  \frac{1}{2}\sigma^2\sum\limits_{j=1}^n \widetilde{\varpi}_{n,j}\p_x^2 \Theta^j = R^n, \quad 1\leq n \leq N,
   \end{align}
with $\Theta^0 = 0$. Subsequently, we establish the following convergence result.

\vskip 1mm
\begin{theorem}\label{them4} Let $u(t_n)$ be the solution of \eqref{VtFDEs} and $U^n$ be the numerical approximation of $u(t_n)$. Then, based on Lemma \ref{lem2.3}, it holds that
\begin{equation*}
    \|u(t_N)-U^N\|_A \leq C(T)  \tau^{\frac{1}{2}+\frac{3}{2}\alpha_0}.
\end{equation*}
\end{theorem}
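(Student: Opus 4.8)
The plan is to transfer the error to the variable $\omega$ and then repeat, almost verbatim, the energy argument of Theorem \ref{thm3.1}, the only genuinely new ingredient being a sharp bound for the accumulated truncation error $\tau\sum_{n=1}^N\|R^n\|^2$. First, since $U^n=\exp((\frac12-\frac{r}{\sigma^2})x)(\mathcal{W}^n+\bar c_t^*)$ and, by the transformations of Section \ref{Sec2}, $u(t_n)=\exp((\frac12-\frac{r}{\sigma^2})x)(\omega(t_n)+\bar c_t^*)$, we have $u(t_n)-U^n=\exp((\frac12-\frac{r}{\sigma^2})x)\Theta^n$ with $\Theta^n=\omega(t_n)-\mathcal{W}^n$; because the multiplier $\exp((\frac12-\frac{r}{\sigma^2})x)$ is bounded on the finite interval $(\bar a,\bar b)$, it suffices to prove $\|\Theta^N\|_A\le C(T)\tau^{1/2+3\alpha_0/2}$.

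Second, I would take the $L^2$-inner product of the error equation \eqref{eq34} with $\tau\Theta^n$, sum over $n=1,\dots,N$, and integrate by parts in the $\p_x^2$-term (using $\Theta^j\in H_0^1$). By the positive-definiteness of the kernel $\beta_{\alpha_0}$ together with $\Theta^0=0$ (cf.\ \eqref{eq31}--\eqref{eq32}), both $\frac12\sigma^2\tau\sum_n\sum_j\widetilde{\varpi}_{n,j}(\nabla\Theta^j,\nabla\Theta^n)$ and $\lambda\tau\sum_n\sum_j\widetilde{\varpi}_{n,j}(\Theta^j,\Theta^n)$ are nonnegative and may be discarded. Estimating the $\varpi$-term with Cauchy--Schwarz, $|\varpi_{n,j}|\le C\tau$ (Lemma \ref{lem2.3}) and Young's inequality exactly as in Theorem \ref{thm3.1}, and absorbing $\frac12\tau\sum_n\|\Theta^n\|^2$ into the left-hand side, I would reach $\|\Theta^N\|_A^2\le C(T)\tau\sum_{n=1}^N\|\Theta^n\|_A^2+C\tau\sum_{n=1}^N\|R^n\|^2$; the discrete Gr\"onwall lemma (for $\tau$ sufficiently small) then gives $\|\Theta^N\|_A^2\le C(T)\tau\sum_{n=1}^N\|R^n\|^2$.

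Third, recalling $R^n=\frac12\sigma^2(\Upsilon_3)^n-(\Upsilon_1)^n-\lambda(\Upsilon_2)^n$, it remains to show $\tau\sum_{n=1}^N\|R^n\|^2\le C\tau^{1+3\alpha_0}$. Taking $L^2$-norms in \eqref{eq23}--\eqref{eq25} and invoking the regularity \eqref{qqq} (with $m=0$ for $\Upsilon_1,\Upsilon_2$ and $m=1$ for $\Upsilon_3$), which yields $\int_0^\tau\|\p_t\omega\|_{H^{2m}}\d t\le C\tau^{\alpha_0}$ and $\int_\tau^{T}\|\p_t^2\omega\|_{H^{2m}}\d t\le C\tau^{\alpha_0-1}$ — here the first-step split in \eqref{eq23}--\eqref{eq25} is crucial, since $t^{\alpha_0-2}$ is not integrable at $t=0$ — together with the kernel bounds $\widetilde{\delta}_m\le\widetilde{\delta}_0\le C\tau^{\alpha_0}$, $\sum_{m=0}^{N-1}\widetilde{\delta}_m=T^{\alpha_0}/\Gamma(\alpha_0+1)$, $\delta_m\le C\tau$, $\sum_{m=0}^{N-1}\delta_m\le CT$, I would obtain, uniformly in $n$, $\|(\Upsilon_1)^n\|\le C\tau^{1+\alpha_0}$ and $\|(\Upsilon_2)^n\|,\|(\Upsilon_3)^n\|\le C\tau^{2\alpha_0}$, as well as the summed estimates $\tau\sum_{n=1}^N\|(\Upsilon_i)^n\|\le C\tau^{1+\alpha_0}$ for $i=2,3$. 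Then the elementary inequality $\tau\sum_n\|(\Upsilon_i)^n\|^2\le\big(\max_n\|(\Upsilon_i)^n\|\big)\big(\tau\sum_n\|(\Upsilon_i)^n\|\big)\le C\tau^{2\alpha_0}\cdot C\tau^{1+\alpha_0}=C\tau^{1+3\alpha_0}$ for $i=2,3$, combined with $\tau\sum_n\|(\Upsilon_1)^n\|^2\le CT\tau^{2+2\alpha_0}\le C\tau^{1+3\alpha_0}$ (valid since $\alpha_0\le1$), gives $\tau\sum_{n=1}^N\|R^n\|^2\le C\tau^{1+3\alpha_0}$, and therefore $\|u(t_N)-U^N\|_A\le C\|\Theta^N\|_A\le C(T)\tau^{1/2+3\alpha_0/2}$.

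The main obstacle is this last step. Because $\p_t^2\omega$ carries the non-integrable singularity $t^{\alpha_0-2}$ near $t=0$, the convolution-quadrature errors $(\Upsilon_i)^n$ cannot be controlled by naively summing pointwise estimates; one must pair the memory weights $\widetilde{\delta}_{n-j}$ — whose total mass $\sum_m\widetilde{\delta}_m$ is a benign constant — against the increments $\int_{t_{j-1}}^{t_j}\|\p_t^2\omega\|\,\d t$, and the precise exponent $\frac12+\frac32\alpha_0$ is produced exactly by trading the uniform-in-$n$ bound $O(\tau^{2\alpha_0})$ against the $\ell^1$-in-$n$ bound $O(\tau^{1+\alpha_0})$ via $\sum_n a_n^2\le(\max_n a_n)\sum_n a_n$.
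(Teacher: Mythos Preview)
Your proposal is correct and follows essentially the same route as the paper: reduce to the error equation \eqref{eq34} for $\Theta^n$, apply the energy argument of Theorem~\ref{thm3.1} (positive-definiteness of $\beta_{\alpha_0}$, $|\varpi_{n,j}|\le C\tau$, Gr\"onwall) to obtain $\|\Theta^N\|_A^2\le C(T)\tau\sum_n\|R^n\|^2$, and then bound the three $\Upsilon_i$-contributions exactly as you describe. In particular, your pairing of the uniform bound $\|(\Upsilon_i)^n\|\le C\tau^{2\alpha_0}$ with the $\ell^1$ bound $\tau\sum_n\|(\Upsilon_i)^n\|\le C\tau^{1+\alpha_0}$ via $\sum_n a_n^2\le(\max_n a_n)\sum_n a_n$ is precisely the mechanism the paper uses (without making the inequality explicit) to conclude $\tau\sum_n\|(\Upsilon_i)^n\|^2\le C\tau^{1+3\alpha_0}$ for $i=2,3$.
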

\begin{proof} Analysing similarly to Theorem \ref{thm3.1}, we use \eqref{eq34} to derive
   \begin{equation*}
      \begin{split}
       \left\|\Theta^N \right\|_A & \leq C(T) \left( \|\Theta^0\| +  \sqrt{ \tau \sum\limits_{n=1}^{N} \|R^n\|^2 } \right) \\
       & \leq C(T) \left( \sqrt{ \tau \sum\limits_{n=1}^{N} \|(\Upsilon_1)^n\|^2 } +  \sqrt{ \tau \sum\limits_{n=1}^{N} \|(\Upsilon_2)^n\|^2 } +  \sqrt{ \tau \sum\limits_{n=1}^{N} \|(\Upsilon_3)^n\|^2 }\right).
       \end{split}
   \end{equation*}
We analyse each term on the right-hand side of the equation. By \eqref{qqq}, Lemma \ref{lem2.3} and \eqref{eq23} arrives at
   \begin{equation*}
      \begin{split}
         \|(\Upsilon_1)^n\| \leq C\tau \int_0^{\tau} t^{\alpha_0 -1}\d t + C\tau^2 \int_{\tau}^{t_n} t^{\alpha_0-2}\d t \leq C\tau^{\alpha_0 +1},
       \end{split}
   \end{equation*}
where we employ $\delta_n \leq C\tau$. It concludes that
   \begin{equation*}
      \begin{split}
            \tau \sum\limits_{n=1}^{N} \|(\Upsilon_1)^n\|^2  \leq  C\tau^{2(\alpha_0 +1)}.
       \end{split}
   \end{equation*}
Utilizing \eqref{eq26}, we have $\widetilde{\delta}_n = \frac{t_{n+1}^{\alpha_0}-t_{n}^{\alpha_0}}{\Gamma(\alpha_0+1)} \leq C\tau^{\alpha_0}$, thus combined with \eqref{eq24} yields
   \begin{equation*}
      \begin{split}
            \|(\Upsilon_2)^n\| \leq C\tau^{\alpha_0} \int_0^{\tau} t^{\alpha_0 -1}\d t + C\tau^{1+\alpha_0} \int_{\tau}^{t_n} t^{\alpha_0-2}\d t \leq C\tau^{2\alpha_0}.
       \end{split}
   \end{equation*}
In addition, we have
   \begin{equation*}
      \begin{split}
            \tau \sum\limits_{n=1}^{N} \|(\Upsilon_2)^n\| &\leq  C \tau \sum\limits_{n=1}^{N} \frac{t_{n}^{\alpha_0}-t_{n-1}^{\alpha_0}}{\Gamma(\alpha_0+1)} \int_0^{\tau} t^{\alpha_0 -1}\d t + C\tau^2 \sum\limits_{n=2}^{N} \sum\limits_{j=2}^{n} \widetilde{\delta}_{n-j} \int_{t_{j-1}}^{t_j} t^{\alpha_0-2}\d t  \\
            & \leq C\tau^{\alpha_0+1} +  C\tau^2 \sum\limits_{j=2}^{N} \int_{t_{j-1}}^{t_j} t^{\alpha_0-2}\d t \left( \sum\limits_{l=0}^{N-j} \widetilde{\delta}_{l} \right) \leq C\tau^{\alpha_0+1}.
       \end{split}
   \end{equation*}
Therefore, the above two formulas deduce
   \begin{equation*}
      \begin{split}
            \tau \sum\limits_{n=1}^{N} \|(\Upsilon_2)^n\|^2  \leq  C\tau^{3\alpha_0+1}.
       \end{split}
   \end{equation*}
  Similarly, we get
  \begin{equation*}
      \begin{split}
            \tau \sum\limits_{n=1}^{N} \|(\Upsilon_3)^n\|^2  \leq  C\tau^{3\alpha_0+1},
       \end{split}
   \end{equation*}
which finishes the proof of Theorem \ref{them4}.
\end{proof}

\section{Fully discrete finite element scheme}\label{Sec4}
In this section, we give a fully discrete finite element scheme based on a time semi-discrete scheme and derive its convergence result.

Define a quasi-uniform partition of $[\bar a,\bar b]$ with mesh diameter $h$ and let $S_h$ be the space of continuous and piecewise linear functions on $[\bar a,\bar b]$ with respect to the partition. Let $I$ be the identity operator. The Ritz projection $\mathrm {\Pi}_h:H_0^1(\bar a,\bar b)\to S_h$ defined by $(\nabla(\psi-\mathrm {\Pi}_h \psi),\nabla \varrho)=0$ for any $\varrho\in S_h$ has the approximation property \cite{Tho}
\begin{equation}
 \begin{array}{lll}\label{eq48}
       \|(I-\mathrm {\Pi}_h )\psi\|\leq Ch^2\|\psi\|_{H^2}, \quad \forall \psi\in H^2(\bar a,\bar b)\cap H_0^1(\bar a,\bar b).
   \end{array}
   \end{equation}

Multiply equation \eqref{eq27} by $\Lambda\in H_0^1[\bar a,\bar b])$ on $[\bar a,\bar b]$ to obtain the weak formulation for any $\Lambda\in H_0^1([\bar a,\bar b])$ and $n=1,2,\dots,N$
 \begin{align}
        (\omega^n,\Lambda) + (\sum\limits_{j=1}^n \varpi_{n,j}\omega^j,\Lambda) &+ \lambda(\sum\limits_{j=1}^n \widetilde{\varpi}_{n,j}\omega^j,\Lambda) +\frac{1}{2}\sigma^2 (\sum\limits_{j=1}^n \widetilde{\varpi}_{n,j}\nabla \omega^j,\nabla\Lambda) \notag\\
        &= (F^n,\Lambda) + (R^n,\Lambda), \quad 1\leq n \leq N.\label{eq41}
   \end{align}
Drop the local truncation error terms in \eqref{eq41} to obtain a fully-discrete finite element scheme: find $\mathcal{W}_h^n\in S_h$ for $1\leq n\leq N$ such that
\begin{align}
       (\mathcal{W}_h^n,\Lambda) + (\sum\limits_{j=1}^n \varpi_{n,j}\mathcal{W}^j,\Lambda) &+ \lambda(\sum\limits_{j=1}^n \widetilde{\varpi}_{n,j}\mathcal{W}_h^j,\Lambda) +\frac{1}{2}\sigma^2 (\sum\limits_{j=1}^n \widetilde{\varpi}_{n,j}\nabla \mathcal{W}_h^j,\nabla\Lambda) \notag\\
        &= (F^n,\Lambda), \quad \forall\Lambda\in S_h.\label{eq42}
   \end{align}
After solving the numerical solution $\mathcal{W}_h^n$ in \eqref{eq42}, we further define the numerical solution $U_h^n$ of equation \eqref{VtFDEs} as follows
\begin{equation*}
U_h^n =  \exp\left((\frac{1}{2}-\frac{r}{\sigma^2})x\right)(\mathcal{W}_h^n + \bar c_t^*).
  \end{equation*}

\subsection{Error estimate of fully discrete scheme}
We now estimate the error of the full discrete scheme \eqref{eq42} in the following theorem.
\begin{theorem}\label{thm6}
  Assume that $\omega$ satisfies the regularity assumption in \eqref{qqq} and $\mathcal{W}_h^n$ be the numerical approximation of $\omega$. Then, the following convergence result holds with $T<\infty$ and $\tau$ sufficiently small
\begin{equation}\label{eqthm6}
\sqrt{\tau\sum\limits_{n=1}^{N}\|\omega^n-\mathcal{W}_h^n}\|^2
       \leq C(T)(\tau^{\frac{1}{2}+\frac{3}{2}\alpha_0}+h^2).
   \end{equation}
\end{theorem}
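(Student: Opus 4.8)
The plan is to decompose the error via the Ritz projection in the standard fashion: write $\omega^n - \mathcal{W}_h^n = (\omega^n - \mathrm{\Pi}_h\omega^n) + (\mathrm{\Pi}_h\omega^n - \mathcal{W}_h^n) =: \rho^n + \theta^n$. The projection error $\rho^n$ is controlled directly by \eqref{eq48} together with the spatial regularity in \eqref{qqq} (which gives $\|\omega^n\|_{H^2}\le C$), so that $\sqrt{\tau\sum_{n=1}^N\|\rho^n\|^2}\le Ch^2$. It then remains to bound $\theta^n\in S_h$, for which I would derive an error equation: subtract \eqref{eq42} from the weak form \eqref{eq41} (with $\Lambda\in S_h$), use the defining orthogonality $(\nabla\rho^n,\nabla\Lambda)=0$ to kill the diffusion contribution of $\rho$, and absorb the remaining $\rho$-terms and the truncation term $R^n$ into a right-hand side. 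This yields a discrete equation for $\theta^n$ of exactly the same algebraic form as \eqref{eq34}, namely
\[
\theta^n + \sum_{j=1}^n\varpi_{n,j}\theta^j + \lambda\sum_{j=1}^n\widetilde\varpi_{n,j}\theta^j - \tfrac12\sigma^2\sum_{j=1}^n\widetilde\varpi_{n,j}\p_x^2\theta^j \ \text{(weakly)} = (R^n,\Lambda) - (\rho^n,\Lambda) - \sum_{j=1}^n(\varpi_{n,j}+\lambda\widetilde\varpi_{n,j})(\rho^j,\Lambda),
\]
with $\theta^0=0$.

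The next step is an energy estimate on $\theta^n$ that mirrors the proof of Theorem \ref{thm3.1}: test with $\tau\theta^n$, sum over $n=1,\dots,N$, and invoke the positive-definiteness of $\beta_{\alpha_0}$ (via \cite{MclTho}) to drop the two nonnegative quadratic kernel sums corresponding to \eqref{eq31}--\eqref{eq32}. Using $|\varpi_{n,j}|\le C\tau$ (Lemma \ref{lem2.3}) and $|\widetilde\varpi_{n,j}|\le C\tau^{\alpha_0}\le C\tau$ for the cross terms, Cauchy–Schwarz and Young's inequality produce
\[
\|\theta^N\|_A^2 \le C\tau\sum_{n=1}^N\|\theta^n\|_A^2 + C\tau\sum_{n=1}^N\Big(\|R^n\|^2 + \|\rho^n\|^2 + \big\|\textstyle\sum_{j=1}^n(\varpi_{n,j}+\lambda\widetilde\varpi_{n,j})\rho^j\big\|^2\Big),
\]
and the discrete Grönwall inequality (for $\tau$ small) closes the estimate. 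The truncation contribution $\tau\sum_n\|R^n\|^2$ is already bounded by $C\tau^{1+3\alpha_0}$ in the proof of Theorem \ref{them4} (recall $R^n=\tfrac12\sigma^2(\Upsilon_3)^n-(\Upsilon_1)^n-\lambda(\Upsilon_2)^n$), and the $\rho$-terms contribute $O(h^2)$, using $\sum_{j=1}^n|\varpi_{n,j}|\le CT$ and $\sum_{j=1}^n\widetilde\varpi_{n,j}=\beta_{\alpha_0+1}(t_n)\le C$. Hence $\|\theta^N\|_A\le C(T)(\tau^{\frac12+\frac32\alpha_0}+h^2)$, and combining with the bound on $\rho^n$ via the triangle inequality gives \eqref{eqthm6}.

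The main obstacle I anticipate is twofold. First, handling the term $\widetilde\varphi_n(\p_x^2\omega)$ carefully: in \eqref{eq19} the truncation error $(\Upsilon_3)^n$ involves $\p_x^2\omega$, so the bound $\tau\sum_n\|(\Upsilon_3)^n\|^2\le C\tau^{1+3\alpha_0}$ quietly uses the $m=1$ regularity in \eqref{qqq} (i.e. $\|\p_t\omega\|_{H^2}$, $\|\p_t^2\omega\|_{H^2}$), and one must make sure that after integrating by parts in \eqref{eq41} the diffusion term is expressed through $(\widetilde\varpi_{n,j}\nabla\omega^j,\nabla\Lambda)$ consistently, so the Ritz orthogonality applies at every time level simultaneously inside the sum — which is fine because $\mathrm{\Pi}_h$ is linear and time-independent. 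Second, and more delicate, is verifying that the cross terms $\sum_{j}\varpi_{n,j}\theta^j$ and $\lambda\sum_j\widetilde\varpi_{n,j}\theta^j$ can indeed be treated perturbatively: the sign-indefiniteness of $q'$ (hence of $\varpi_{n,j}$) means we cannot use positivity there, but the uniform bound $|\varpi_{n,j}|\le C\tau$ from Lemma \ref{lem2.3} reduces these to lower-order discrete convolutions absorbable by Grönwall — this is exactly the structural payoff of the convolution transformation, and the argument is robust provided $\tau$ is small enough that the Grönwall constant does not blow up over $[0,T]$.
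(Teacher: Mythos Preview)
Your proposal is correct and follows essentially the same approach as the paper: the same Ritz-projection splitting $\omega^n-\mathcal{W}_h^n=\eta^n+\xi^n$ (your $\rho^n+\theta^n$), the same error equation tested against $\xi^n$, the same use of the positive-definiteness of $\beta_{\alpha_0}$ to discard the two kernel quadratic forms, and the same Gr\"onwall closure via $|\varpi_{n,j}|\le C\tau$ together with the truncation bound from Theorem~\ref{them4}. One harmless slip: the inequality $\tau^{\alpha_0}\le C\tau$ is false for $\alpha_0<1$ and $\tau<1$, but you never actually need a pointwise bound on $\widetilde\varpi_{n,j}$, since the $\widetilde\varpi$--$\theta$ terms are dropped by positivity and the $\widetilde\varpi$--$\rho$ terms are handled via $\sum_j\widetilde\varpi_{n,j}\le C$, exactly as you note later.
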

\begin{proof}
Set $\omega^n - \mathcal{W}_h^n = \xi^n + \eta^n$ with $\eta^n := \omega^n-\rm \Pi_h\omega^n$ and $\xi^n :=\rm \Pi_h\omega^n - \mathcal{W}_h^n$. Then we subtract \eqref{eq42} from \eqref{eq41} and select $\Lambda=\xi^n$ to obtain
\begin{align}
        (\xi^n,\xi^n) &+ (\sum\limits_{j=1}^n \varpi_{n,j}\xi^j,\xi^n) + \lambda(\sum\limits_{j=1}^n \widetilde{\varpi}_{n,j}\xi^j,\xi^n) +\frac{1}{2}\sigma^2 (\sum\limits_{j=1}^n \widetilde{\varpi}_{n,j}\nabla \xi^j,\nabla\xi^n) \notag\\
        &= (R^n,\xi^n) - (\eta^n,\xi^n) - (\sum\limits_{j=1}^n \varpi_{n,j}\eta^j,\xi^n) - \lambda(\sum\limits_{j=1}^n \widetilde{\varpi}_{n,j}\eta^j,\xi^n). \label{eq43}
   \end{align}
   We multiply the equation by $\tau$ and sum $n$ from $1$ to $N$ to get
   \begin{align}
        \tau\sum\limits_{n=1}^{N}(\xi^n,\xi^n) &+ \tau\sum\limits_{n=1}^{N}(\sum\limits_{j=1}^n \varpi_{n,j}\xi^j,\xi^n) + \lambda\tau\sum\limits_{n=1}^{N}(\sum\limits_{j=1}^n \widetilde{\varpi}_{n,j}\xi^j,\xi^n)\notag\\
        &+\frac{1}{2}\sigma^2 \tau\sum\limits_{n=1}^{N}(\sum\limits_{j=1}^n \widetilde{\varpi}_{n,j}\nabla \xi^j,\nabla\xi^n) = \tau\sum\limits_{n=1}^{N}(R^n,\xi^n)- \tau\sum\limits_{n=1}^{N}(\eta^n,\xi^n)  \notag\\
        &- \tau\sum\limits_{n=1}^{N}(\sum\limits_{j=1}^n \varpi_{n,j}\eta^j,\xi^n) - \lambda\tau\sum\limits_{n=1}^{N}(\sum\limits_{j=1}^n \widetilde{\varpi}_{n,j}\eta^j,\xi^n). \label{eq44}
   \end{align}
Since the kernel $\beta_{\alpha_0}(t)$ is positive definite with $\xi^0=0$, we have
\begin{equation}\label{eq45}
    \lambda\tau\sum\limits_{n=1}^{N}(\sum\limits_{j=1}^n \widetilde{\varpi}_{n,j}\xi^j,\xi^n) \geq 0,
\end{equation}
and
\begin{equation}\label{eq46}
\frac{1}{2}\sigma^2 \tau\sum\limits_{n=1}^{N}(\sum\limits_{j=1}^n \widetilde{\varpi}_{n,j}\nabla \xi^j,\nabla\xi^n)\geq 0,
\end{equation}
see \cite{MclTho}. Further, we utilize the Cauchy-Schwarz inequality, \eqref{eq45}, \eqref{eq46} and $|ab|\leq \frac{1}{8}a^2+2b^2$ to obtain
\begin{equation*}
   \begin{split}
       \frac{3}{8} \tau\sum\limits_{n=1}^{N}\|\xi^n\|^2  \leq &  2\tau\sum\limits_{n=1}^{N}\left\| \sum\limits_{j=1}^n \varpi_{n,j} \xi^j\right\|^2 + 2\tau\sum\limits_{n=1}^{N}\left\| R^n\right\|^2+ 2\tau\sum\limits_{n=1}^{N}\left\| \eta^n\right\|^2 \\
       & +2\tau\sum\limits_{n=1}^{N}\left\| \sum\limits_{j=1}^n \varpi_{n,j} \eta^j\right\|^2 + 2\tau\lambda\sum\limits_{n=1}^{N}\left\| \sum\limits_{j=1}^n \widetilde{\varpi}_{n,j} \eta^j\right\|^2\\
       \leq &  2\tau\sum\limits_{n=1}^{N}\left( \sum\limits_{j=1}^n |\varpi_{n,j}| \|\xi^j\|\right)^2 + 2\tau\sum\limits_{n=1}^{N}\left\| R^n\right\|^2+ 2\tau\sum\limits_{n=1}^{N}\left\| \eta^n\right\|^2 \\
       & +2\tau\sum\limits_{n=1}^{N}\left(\sum\limits_{j=1}^n |\varpi_{n,j}| \|\eta^j\|\right)^2 + 2\tau\lambda\sum\limits_{n=1}^{N}\left(\sum\limits_{j=1}^n |\widetilde{\varpi}_{n,j}| \|\eta^j\|\right)^2.
   \end{split}
\end{equation*}
By defining a new norm
\begin{equation}\label{eq47}
   \begin{split}
       \|\xi^n\|_B : = \sqrt{\tau\sum\limits_{l=1}^{n}\|\xi^l\|^2}, \quad 1\leq n \leq N,
   \end{split}
\end{equation}
then we have
\begin{equation*}
   \begin{split}
        \|\xi^N\|_B^2  \leq & C\tau\sum\limits_{n=1}^{N}  \|\xi^n\|_B^2 + \frac{16}{3}\tau\sum\limits_{n=1}^{N}\left\| R^n\right\|^2+ \frac{16}{3}\tau\sum\limits_{n=1}^{N}\left\| \eta^n\right\|^2\\
        & +\frac{16}{3}\tau\sum\limits_{n=1}^{N}\left(\sum\limits_{j=1}^n |\varpi_{n,j}| \|\eta^j\|\right)^2 + \frac{16}{3}\tau\lambda\sum\limits_{n=1}^{N}\left(\sum\limits_{j=1}^n |\widetilde{\varpi}_{n,j}| \|\eta^j\|\right)^2.
   \end{split}
\end{equation*}
We utilize $|\varpi_{n,j}|\leq C\tau$, $|\widetilde{\varpi}_{n,j}|\leq C\tau$ and $\|\eta^n\|\leq Ch^2$ to obtain
\begin{equation*}
   \begin{split}
        \|\xi^N\|_B^2  \leq & C\tau\sum\limits_{n=1}^{N}  \|\xi^n\|_B^2 + \frac{16}{3}\tau\sum\limits_{n=1}^{N}\left\| R^n\right\|^2+ C\tau\sum\limits_{n=1}^{N}\left\| \eta^n\right\|^2+C\tau\sum\limits_{n=1}^{N}h^4.
   \end{split}
\end{equation*}
For the formula above, the discrete Gr\"{o}nwall's lemma gives by taking $\tau\leq \frac{1}{2C}$,
\begin{equation*}
   \begin{split}
        \|\xi^N\|_B^2 &\leq C(T) \tau\sum\limits_{n=1}^{N} (\left\| \eta^n\right\|^2+\left\| R^n\right\|^2+h^4)\\
        &\leq (C(T))^2(\tau^{\frac{1}{2}+\frac{3}{2}\alpha_0}+h^2)^2.
   \end{split}
\end{equation*}
We combine this with $\|\omega^N - \mathcal{W}_h^N
\|\leq Ch^2$ to get
\begin{equation*}
   \begin{split}
       \|\omega^N -\mathcal{W}_h^N\|_B^2 \leq C(T)^2(\tau^{\frac{1}{2}+\frac{3}{2}\alpha_0}+h^2)^2.
   \end{split}
\end{equation*}
The proof is completed by using \eqref{eq47}.
\end{proof}

\section{Numerical experiments} \label{Sec5}
In this section, we provide some numerical examples to verify our theoretical results.  {\color{black}  A uniform mesh partitioned by the nodal points
$x_i=ih$ where $0\leq i\leq M$ and $h=\frac{\bar a-\bar b}{M}$ is used in examples. As the exact solutions are in general not available, we measure the $ L^2$ error following the two-mesh principle \cite[Page 107]{Far}}
{\color{black}{\begin{align*}
  & \text{Error}_\tau(N,M)= \sqrt{\tau\sum_{n=1}^{N}h\sum\limits_{j=1}^{M-1} \left|U_j^{2n}(2N,M)-U_j^n(N,M)\right|^2},\\
  &
  \text{Error}_h(N,M)=   \sqrt{\tau\sum_{n=1}^{N}h\sum_{j=1}^{M-1}\left|U_{2j}^{n}(N,2M)-U_j^n(N,M)\right|^2},
\end{align*}}}
and we obtain the temporal and spatial convergence rates by
\begin{align*}
  \text{Order}_\tau = \log_{2}\left(\frac{\text{Error}_\tau(N,M)}{\text{Error}_\tau(2N,M)}\right), \quad \text{Order}_h = \log_{2}\left(\frac{\text{Error}_h(N,M)}{\text{Error}_h(N,2M)}\right).
\end{align*}

\vskip 1mm
\noindent
{\bf Example 1.} Let $(\bar a,\bar b) = (0,1)$, $T=1$, $\sigma=0.45$, $r=0.03$, $u_0(x)=\sin(\pi x)$, the forcing term $f=(\frac{r}{\sigma^4}-\frac{1}{2\sigma^2})\pi\cos(\pi x)$, and $\alpha(t)=\alpha_0-\frac{1}{11}t$ $(\alpha'(0)\neq 0)$, where $\alpha_0 \in [0.1, 1)$. First, with different $\alpha_0$, the $L^2$ errors and  temporal convergence orders are shown in Table \ref{tb1} when $M = 32$, which demonstrate that the method achieves $\frac{1}{2}+\frac{3}{2}\alpha_0$ order convergence in time, which is consistent with the theoretical analysis. In Table \ref{tb2}, we explore the $ L^2$ error and the order of spatial convergence of the finite element scheme when $N = 32$ for different $\alpha_0$. As $M$ increases, the second-order convergence rate in space is observed from Table \ref{tb2}.
\begin{table}
    \centering
    \caption{$ L^2$-norm errors and temporal convergence orders when $M=32$.} \label{tb1}
    \begin{tabular}{cccccccccccc}
      \toprule
     & \multicolumn{2}{c}{$\alpha_0=0.1$} & &\multicolumn{2}{c}{$\alpha_0=0.4$} \\
     \cmidrule{2-3}  \cmidrule{5-6}
      $N$ & $\text{Error}_\tau(N,M)$ & $\text{Order}_\tau$ & $N$ & $\text{Error}_\tau(N,M)$ &
      $\text{Order}_\tau$   \\
      \midrule
       16 &  $1.8337 \times 10^{-3}$   &  *       & 16  &  $2.0227 \times 10^{-3}$  &  *     \\
       32  &  $1.1920 \times 10^{-3}$   &  0.62   & 32   &  $9.1176 \times 10^{-4}$  &  1.15      \\
       64  &  $7.7044 \times 10^{-4}$   &  0.63   & 64   &  $4.0515 \times 10^{-4}$  &  1.17    \\
       128 &  $4.9478 \times 10^{-4}$   &  0.64   & 128  &  $1.7783 \times 10^{-4}$  &  1.19    \\
     Theory &   & 0.65      &  &   &  1.10     \\
      \midrule
     & \multicolumn{2}{c}{$\alpha_0=0.7$} & &\multicolumn{2}{c}{$\alpha_0=0.9$} \\
     \cmidrule{2-3}  \cmidrule{5-6}
      $N$ & $\text{Error}_\tau(N,M)$ & $\text{Order}_\tau$  & $N$ & $\text{Error}_\tau(N,M)$ & $\text{Order}_\tau$   \\
      \midrule
       16  &  $4.7357 \times 10^{-4}$   &  *     & 16   &  $1.3701 \times 10^{-4}$  &  *    \\
       32   &  $1.5230 \times 10^{-4}$   &  1.64     & 32   &  $3.6889 \times 10^{-5}$  &  1.89     \\
       64   &  $4.8774 \times 10^{-5}$   &  1.64   & 64   &  $9.8434 \times 10^{-6}$  &  1.91    \\
       128  &  $1.5554 \times 10^{-5}$   &  1.65   & 128  &  $2.5712 \times 10^{-6}$  &  1.94    \\
       Theory &   & 1.55      &  &   &   1.85     \\
      \bottomrule
    \end{tabular}
\end{table}

\begin{table}
    \centering
    \caption{$L^2$-norm errors and spatial convergence orders when $N=32$.} \label{tb2}
    \begin{tabular}{cccccccccccc}
      \toprule
     & \multicolumn{2}{c}{$\alpha_0=0.1$} & &\multicolumn{2}{c}{$\alpha_0=0.4$} \\
     \cmidrule{2-3}  \cmidrule{5-6}
      $M$ & $\text{Error}_h(N,M)$ & $\text{Order}_h$ & $M$ & $\text{Error}_h(N,M)$ &
      $\text{Order}_h$   \\
      \midrule
       32  &  $1.0586 \times 10^{-4}$   &  *       & 32   &  $9.2023 \times 10^{-5}$   &  *      \\
       64  &  $2.6466 \times 10^{-5}$   &  2.00    & 64   &  $2.3005 \times 10^{-5}$   &  2.00    \\
       128 &  $6.6166 \times 10^{-6}$   &  2.00    & 128  &  $5.7513 \times 10^{-6}$   &  2.00    \\
       256 &  $1.6542 \times 10^{-6}$   &  2.00    & 256  &  $1.4378 \times 10^{-6}$   &  2.00    \\
    Theory &   & 2.00     &  &   &  2.00     \\
     \midrule
     & \multicolumn{2}{c}{$\alpha_0=0.7$} & &\multicolumn{2}{c}{$\alpha_0=0.9$} \\
     \cmidrule{2-3}  \cmidrule{5-6}
      $M$ & $\text{Error}_h(N,M)$ & $\text{Order}_h$  & $M$ & $\text{Error}_h(N,M)$ & $\text{Order}_h$   \\
      \midrule
       32  &  $7.7674 \times 10^{-5}$    &  *      & 32   &  $6.5094 \times 10^{-5}$   &  *      \\
       64  &  $1.9417 \times 10^{-5}$    &  2.00   & 64   &  $1.6271 \times 10^{-5}$   &  2.00   \\
       128 &  $4.8540 \times 10^{-6}$    &  2.00   & 128  &  $4.0675 \times 10^{-6}$   &  2.00   \\
       256 &  $1.2135 \times 10^{-6}$    &  2.00   & 256  &  $1.0169 \times 10^{-6}$   &  2.00    \\
       Theory &   & 2.00     &  &   &  2.00     \\
      \bottomrule
    \end{tabular}
\end{table}
\vskip 2mm

\vskip 1mm
\noindent{\bf Example 2.} Let $(\bar a,\bar b)=(0,1)$, $T=1$, $u_0(x)=\sin(\pi x)$, the forcing term $f=0$, $\sigma = 0.5$, $r=0.25$ and a nonlinear exponent $\alpha(t)=\alpha_0-\frac{1}{11}t^2$ ($\alpha'(0)= 0$), where $\alpha_0 \in [0.1, 1)$. We get the same conclusions as Example 1 from Tables \ref{tb3}--\ref{tb4}.

\begin{table}
    \center \footnotesize
    \caption{$ L^2$-norm errors and temporal convergence orders when $M=32$.} \label{tb3}
    \begin{tabular}{cccccccccccc}
      \toprule
     & \multicolumn{2}{c}{$\alpha_0=0.1$} & &\multicolumn{2}{c}{$\alpha_0=0.4$} \\
     \cmidrule{2-3}  \cmidrule{5-6}
      $N$ & $\text{Error}_\tau(N,M)$ & $\text{Order}_\tau$ & $N$ & $\text{Error}_\tau(N,M)$ &
      $\text{Order}_\tau$   \\
      \midrule
       32  &  $2.1244 \times 10^{-3}$   &  *      & 32   &  $1.9582 \times 10^{-4}$  &  *      \\
       64  &  $1.3822 \times 10^{-3}$   &  0.62   & 64   &  $1.0376 \times 10^{-4}$  &  0.92    \\
       128 &  $8.9649 \times 10^{-4}$   &  0.62   & 128  &  $5.2783 \times 10^{-5}$  &  0.98    \\
       256 &  $5.7974 \times 10^{-4}$   &  0.63   & 256  &  $2.5898 \times 10^{-5}$  &  1.03    \\
     Theory &   & 0.65      &  &   &  1.10     \\
      \midrule
     & \multicolumn{2}{c}{$\alpha_0=0.7$} & &\multicolumn{2}{c}{$\alpha_0=0.9$} \\
     \cmidrule{2-3}  \cmidrule{5-6}
      $N$ & $\text{Error}_\tau(N,M)$ & $\text{Order}_\tau$  & $N$ & $\text{Error}_\tau(N,M)$ & $\text{Order}_\tau$   \\
      \midrule
       32   &  $3.6093 \times 10^{-4}$   &  *      & 32   &  $1.0412 \times 10^{-4}$  &  *      \\
       64   &  $1.2192 \times 10^{-4}$   &  1.57   & 64   &  $2.9759 \times 10^{-5}$  &  1.81   \\
       128  &  $4.1391 \times 10^{-5}$   &  1.56   & 128  &  $8.4148 \times 10^{-6}$  &  1.82    \\
       256  &  $1.4119 \times 10^{-5}$   &  1.55   & 256  &  $2.3241 \times 10^{-6}$  &  1.86    \\
       Theory &   & 1.55      &  &   &   1.85     \\
      \bottomrule
    \end{tabular}
\end{table}

\newpage
\begin{table}
    \center \footnotesize
    \caption{$ L^2$-norm errors and spatial convergence orders when $N=32$.} \label{tb4}
    \begin{tabular}{cccccccccccc}
      \toprule
     & \multicolumn{2}{c}{$\alpha_0=0.1$} & &\multicolumn{2}{c}{$\alpha_0=0.4$} \\
     \cmidrule{2-3}  \cmidrule{5-6}
      $M$ & $\text{Error}_h(N,M)$ & $\text{Order}_h$ & $M$ & $\text{Error}_h(N,M)$ &
      $\text{Order}_h$   \\
      \midrule
       32  &  $2.4148 \times 10^{-4}$   &  *       & 32   &  $2.3292 \times 10^{-4}$   &  *      \\
       64  &  $6.1457 \times 10^{-5}$   &  1.97    & 64   &  $5.9371 \times 10^{-5}$   &  1.97    \\
       128 &  $1.5505 \times 10^{-5}$   &  1.99    & 128  &  $1.4990 \times 10^{-5}$   &  1.99    \\
       256 &  $3.8940 \times 10^{-6}$   &  1.99    & 256  &  $3.7664 \times 10^{-6}$   &  1.99    \\
    Theory &   & 2.00     &  &   &  2.00     \\
     \midrule
     & \multicolumn{2}{c}{$\alpha_0=0.7$} & &\multicolumn{2}{c}{$\alpha_0=0.9$} \\
     \cmidrule{2-3}  \cmidrule{5-6}
      $M$ & $\text{Error}_h(N,M)$ & $\text{Order}_h$  & $M$ & $\text{Error}_h(N,M)$ & $\text{Order}_h$   \\
      \midrule
       32  &  $2.2778 \times 10^{-4}$    &  *      & 32   &  $2.2503 \times 10^{-4}$   &  *      \\
       64  &  $5.8120 \times 10^{-5}$    &  1.97   & 64   &  $5.7465 \times 10^{-5}$   &  1.97   \\
       128 &  $1.4683 \times 10^{-5}$    &  1.98   & 128  &  $1.4525 \times 10^{-5}$   &  1.98  \\
       256 &  $3.6902 \times 10^{-6}$    &  1.99   & 256  &  $3.6517 \times 10^{-6}$   &  1.99    \\
       Theory &   & 2.00     &  &   &  2.00     \\
      \bottomrule
    \end{tabular}
\end{table}

\vskip 2mm
{\color{black}{
\noindent{\bf Example 3.} Let $(\bar a,\bar b) = (0,1)$, $T=1$, $\sigma=0.4$, $r=0.1$, $u_0(x)=\sin(\pi x)$, the forcing term $f=(\frac{r}{\sigma^4}-\frac{1}{2\sigma^2})\pi\cos(\pi x)$, and $\alpha(t)=\alpha_0+\frac{1}{11}t^3$ $(\alpha'(0)= 0)$, where $\alpha_0 \in (0, 0.9]$. Different from the first two examples, the  $\alpha(t)$ is an increasing function. We get the same conclusions as Example 1 from Tables \ref{tb5}--\ref{tb6}, which indicates that the monotonicity of $\alpha(t)$ does not affect the numerical accuracy.

\begin{table}
    \centering
    \caption{$ L^2$-norm errors and temporal convergence orders when $M=32$.} \label{tb5}
    \begin{tabular}{cccccccccccc}
      \toprule
     & \multicolumn{2}{c}{$\alpha_0=0.1$} & &\multicolumn{2}{c}{$\alpha_0=0.4$} \\
     \cmidrule{2-3}  \cmidrule{5-6}
      $N$ &  $\text{Error}_\tau(N,M)$  & $\text{Order}_\tau$& $N$ & $\text{Error}_\tau(N,M)$  &
      $\text{Order}_\tau$ \\
      \midrule
       4  &  $3.6526 \times 10^{-3}$   &  *      & 4   &  $7.3880 \times 10^{-3}$  &  *      \\
       8  &  $2.3790 \times 10^{-3}$   &  0.62   & 8   &  $3.4375 \times 10^{-3}$  &  1.10    \\
       16 &  $1.5270 \times 10^{-3}$   &  0.64   & 16  &  $1.5660 \times 10^{-3}$  &  1.13    \\
       32 &  $9.7177 \times 10^{-4}$   &  0.65   & 32  &  $7.0100 \times 10^{-4}$  &  1.16    \\
     Theory &   & 0.65      &  &   &  1.10     \\
      \midrule
     & \multicolumn{2}{c}{$\alpha_0=0.7$} & &\multicolumn{2}{c}{$\alpha_0=0.9$} \\
     \cmidrule{2-3}  \cmidrule{5-6}
      $N$ &  $\text{Error}_\tau(N,M)$ & $\text{Order}_\tau$ & $N$ &  $\text{Error}_\tau(N,M)$  & $\text{Order}_\tau$   \\
      \midrule
       4   &  $3.3174 \times 10^{-3}$   &  *      & 4   &  $1.3359 \times 10^{-3}$  &  *      \\
       8   &  $1.0746 \times 10^{-3}$   &  1.63   & 8   &  $3.5407 \times 10^{-4}$  &  1.92   \\
       16  &  $3.4713 \times 10^{-4}$   &  1.63   & 16  &  $9.5064 \times 10^{-5}$  &  1.90    \\
       32  &  $1.1170 \times 10^{-4}$   &  1.64   & 32  &  $2.5463 \times 10^{-5}$  &  1.90    \\
       Theory &   & 1.55      &  &   &   1.85     \\
      \bottomrule
    \end{tabular}
\end{table}

\newpage
\begin{table}
    \centering
    \caption{$ L^2$-norm errors and spatial convergence orders when $N=32$.} \label{tb6}
    \begin{tabular}{cccccccccccc}
      \toprule
     & \multicolumn{2}{c}{$\alpha_0=0.1$} & &\multicolumn{2}{c}{$\alpha_0=0.4$} \\
     \cmidrule{2-3}  \cmidrule{5-6}
      $M$ & $\text{Error}_h(N,M)$ &  $\text{Order}_h$ & $M$ & $\text{Error}_h(N,M)$ &
       $\text{Order}_h$   \\
      \midrule
       4  &  $5.3523 \times 10^{-3}$   &  *       & 4   &  $4.6527 \times 10^{-3}$   &  *      \\
       8  &  $1.3414 \times 10^{-3}$   &  2.00    & 8   &  $1.1635 \times 10^{-3}$   &  2.00    \\
       16 &  $3.3542 \times 10^{-4}$   &  2.00    & 16  &  $2.9076 \times 10^{-4}$   &  2.00    \\
       32 &  $8.3856 \times 10^{-5}$   &  2.00    & 32  &  $7.2682 \times 10^{-5}$   &  2.00    \\
    Theory &   & 2.00     &  &   &  2.00     \\
     \midrule
     & \multicolumn{2}{c}{$\alpha_0=0.7$} & &\multicolumn{2}{c}{$\alpha_0=0.9$} \\
     \cmidrule{2-3}  \cmidrule{5-6}
      $M$ & $\text{Error}_h(N,M)$ &  $\text{Order}_h$ & $M$ & $\text{Error}_h(N,M)$ &  $\text{Order}_h$   \\
      \midrule
       4  &  $3.8724 \times 10^{-3}$    &  *      & 4   &  $3.1934 \times 10^{-3}$   &  *      \\
       8  &  $9.6488 \times 10^{-4}$    &  2.00   & 8   &  $7.9319 \times 10^{-4}$   &  2.01   \\
       16 &  $2.4091 \times 10^{-4}$    &  2.00   & 16  &  $1.9788 \times 10^{-4}$   &  2.00  \\
       32 &  $6.0207 \times 10^{-5}$    &  2.00   & 32  &  $4.9443 \times 10^{-5}$   &  2.00    \\
       Theory &   & 2.00     &  &   &  2.00     \\
      \bottomrule
    \end{tabular}
\end{table}
\vskip 2mm

}}

\section{Concluding remarks} \label{Sec6}

In this work, we investigate the subdiffusive Black-Scholes model of variable exponent, which is widely applied in situations where the historical information significantly influences the current pricing. A series of transformations are applied to convert the original model into a feasible formulation such that the numerical methods could be designed and analyzed. Numerical experiments are conducted to confirm the theoretical results. 

{\color{black}It is worth mentioning that, despite successful applications of the subdiffusive Black-Scholes model of variable exponent \cite{Luc}, the derivation of the variable-exponent model from certain stochastic dynamics requires further investigation. To be specific, it is shown in, e.g. \cite{Jum0}, that the subdiffusive Black-Scholes equation with a constant exponent $2H$ for some $0<H<1/2$ could be derived from the stochastic dynamics driven by the  fractional Brownian motion $B^H_t$ with the Hurst index $H$, which is a centered Gaussian process determined by the following covariance function
\begin{align}
\mathbb E[B^H_t\cdot B^H_t]=\frac{1}{2}\{|t|^{2H}+|s|^{2H}-|t-s|^{2H}\},~~\forall s,t\geq 0.
\end{align}
%Here $0<H<1$ is called the Hurst index.
 Nevertheless, it is pointed out in \cite{Ara} that ``the assumption of a constant Holder regularity (Hurst exponent) in financial time-series seems to be too rigid to address some particularities of a market beyond tranquil period'', and there exist several works considering the time-varying Hurst index in stock markets \cite{Alv,Cao}. In particular, \cite{Ara} adopts the multifractional Brownian motion $W_{h(t)}$ for some function $h(t)\in (0,1)$, which is also a centered Gaussian process formally defined by its covariance 
\begin{align}
\mathbb E[W_{h(t)}\cdot W_{h(s)}]=D(h(t),h(s))\{|t|^{h(t)+h(s)}+|s|^{h(t)+h(s)}-|t-s|^{h(t)+h(s)}\},~~\forall s,t\geq 0,
\end{align}
where
$$D(x,y):=\frac{\sqrt{\Gamma(2x+1)\Gamma(2y+1)\sin(\pi x)\sin(\pi y)}}{2\Gamma(x+y+1)\sin(\pi(x+y)/2)}, $$
to derive a multifractional Black-Scholes model and substantiate its effectiveness in comparison with its  counterparts based on standard and fractional Brownian motions. 
As the exponent in the subdiffusive Black-Scholes model is closely related to the Hurst index as shown above, we adopt the variable exponent in (\ref{FBS}) following the spirit of the multifractional Brownian motion, and we will work on a rigorous proof from the stochastic dynamics with time-varying Hurst index to the variable-exponent fractional model in the future. }

\acknowledgements{\rm This work was partially supported by the National Social Science Foundation of China under Grant 24BTJ006.}

\vskip 0.3cm {\bf Conflict of Interest}\quad The authors declare no
conflict of interest.

%\conflictofInterest{\rm The author declares no conflict of interest.}
%%ÉùÃ÷(ÎÄÕÂÖ»ÓÐÒ»¸ö×÷Õß)

%\conflictofInterest{\rm XX is an editorial board member for Acta Mathematica Scientia and was not involved in the editorial review or the decision to publish this article. All authors declare that there are no competing interests.}%%%%/editor-in-chief(Ö÷±àÐèÒª)(×÷ÕßÊÇ±àÎ¯)XX»»±àÎ¯×÷Õß

%\conflictofInterest{\rm The authors declare no conflict of interest.}

\end{document}